\newcommand{\lleft}{\left}
\newcommand{\rright}{\right}
\newtheorem{thm}{Theorem}
\newtheorem{lemma}{Lemma}
\newtheorem{cor}{Corollary}
\theoremstyle{definition}
\newtheorem{remark}{Remark}
\theoremstyle{definition}
\newcommand{\prob}{\mathsf P}
\newcommand{\topr}{\xrightarrow{\prob}}
\DeclareMathOperator{\cov}{Cov}
\newcommand{\rrvert}{\vert}
\newcommand{\llvert}{\vert}
\newcommand{\coloneqq}{:=}
\begin{document}

\begin{frontmatter}
\pretitle{Research Article}

\title{Maximum likelihood estimation in the non-ergodic fractional Vasicek model}

\author{\inits{S.}\fnms{Stanislav}~\snm{Lohvinenko}\ead[label=e1]{stanislav.lohvinenko@gmail.com}}
\author{\inits{K.}\fnms{Kostiantyn}~\snm{Ralchenko}\thanksref{cor1}\ead[label=e2]{k.ralchenko@gmail.com}\orcid{0000-0001-7208-3130}}
\thankstext[type=corresp,id=cor1]{Corresponding author.}
\address{Department of Probability Theory, Statistics and Actuarial
Mathematics,
\institution{Taras~Shevchenko National University of~Kyiv},
64,~Volodymyrs'ka St., 01601~Kyiv,~\cny{Ukraine}}



\markboth{S. Lohvinenko, K. Ralchenko}{Maximum likelihood estimation in the non-ergodic fractional Vasicek model}

\begin{abstract}
We investigate the fractional Vasicek model described by the stochastic
differential equation $dX_{t}=(\alpha -\beta X_{t})\,dt+\gamma \,dB
^{H}_{t}$, $X_{0}=x_{0}$, driven by the fractional Brownian motion
$B^{H}$ with the known Hurst parameter $H\in (1/2,1)$. We study the maximum
likelihood estimators for unknown parameters $\alpha $ and $\beta $ in
the non-ergodic case (when $\beta < 0$) for arbitrary $x_{0}\in
\mathbb{R}$, generalizing the result of Tanaka, Xiao and Yu (2019) for
particular $x_{0}=\alpha /\beta $, derive their asymptotic distributions
and prove their asymptotic independence.
\end{abstract}
\begin{keywords}
\kwd{Fractional Brownian motion}
\kwd{fractional Vasicek model}
\kwd{maximum likelihood estimation}
\kwd{moment generating function}
\kwd{asymptotic distribution}
\kwd{non-ergodic process}
\end{keywords}
\begin{keywords}[MSC2010]%
\kwd{60G22}
\kwd{62F10}
\kwd{62F12}
\end{keywords}

\received{\sday{27} \smonth{5} \syear{2019}}
\revised{\sday{7} \smonth{8} \syear{2019}}
\accepted{\sday{5} \smonth{9} \syear{2019}}
\publishedonline{\sday{23} \smonth{9} \syear{2019}}
\end{frontmatter}

\section{Introduction}%
\label{sec:introduction}
The present paper deals with the fractional Vasicek model\index{fractional Vasicek model} of the form
%
\begin{equation}
\label{eq:fr-vas} dX_{t} = ( \alpha - \beta X_{t} ) dt +
\gamma dB_{t}^{H}, \quad X_{0}=x_{0}
\in \mathbb{R},
\end{equation}
where $B^{H}$ is the fractional Brownian motion\index{fractional Brownian motion} with the Hurst index
$H \in (1/2, 1)$.\index{Hurst index} It is a generalization of the
classical interest rate model proposed by O. Vasicek
\cite{vasicek} in 1977. This generalization enables to study processes
with long-range dependence, which arise in financial mathematics and
several other areas such as telecommunication networks, investigation
of turbulence and image processing. In recent years, many articles on
various financial applications of the fractional Vasicek model
\eqref{eq:fr-vas} have appeared, see
e.g.~\cite{Chronopoulou-Viens12a,Corlay,Fink13,HLW14,Song-Li-2018,XZZC14}).
In order to use this model in practice, a theory of parameter estimation\index{parameter estimation}
is necessary.

Notice that in the particular case $\alpha =0$, \eqref{eq:fr-vas} is a
so-called fractional Ornstein--Uhlenbeck process, introduced in
\cite{CKM}. The drift parameter estimation\index{parameter estimation} for it has been studied since
2002, see the paper \cite{KB}, where the maximum likelihood
estimation was considered. The asymptotic and exact distributions of the
maximum likelihood estimator\index{maximum likelihood estimator (MLE)} (MLE) were investigated later in
\cite{Bercu2010,Tanaka2013,Tanaka2015}. Alternative approaches to the
drift parameter estimation\index{parameter estimation} were proposed and studied in
\cite{BESO,Bishwal11,HuNu,HuSong13,HuNuZhou,kumirase}. We refer to the
article \cite{MR-survey} for a survey on this topic, and to the
book \cite{KMR2017} for its more detailed presentation.

In the general case, the least squares and ergodic-type estimators of
unknown parameters $\alpha $ and $\beta $ were studied in
\cite{Lohvinenko-Ralchenko-Zhuchenko-2016,Xiao-Yu-2017,Xiao-Yu-2018}.
The corresponding MLEs of $\alpha $ and $\beta $ were presented in
\cite{Lohvinenko-Ralchenko-2017}. Their consistency and asymptotic
normality were proved there for the case $\beta >0$. Slightly more
general results were proved in \cite{Lohvinenko-Ralchenko-2018},
where joint asymptotic normality of MLE of the vector parameter
$(\alpha , \beta )$ was established. Recently Tanaka et al.
\cite{Tanaka-Xiao-Yu} investigated asymptotic behavior of MLEs\index{maximum likelihood estimator (MLE)} in the
cases $\beta =0$ and $\beta <0$. However, in the latter case the
asymptotic distribution was obtained only under assumption that
$x_{0}=\frac{\alpha }{\beta }$. The study of the case $x_{0}\neq \frac{
\alpha }{\beta }$ requires a different technique and still remains an open
problem. The goal of the present paper is to fill in the gap and to
derive asymptotic distributions of the MLEs\index{maximum likelihood estimator (MLE)} of $\alpha $ and
$\beta $ for arbitrary $x_{0}\in \mathbb{R}$, $\alpha \in \mathbb{R}$
and $\beta <0$. Moreover, we prove that the MLEs\index{maximum likelihood estimator (MLE)} for $\alpha $ and
$\beta $ are asymptotically independent.

The asymptotic behavior of the process $X$ and of the estimators
substantially depends on the sign of the parameter $\beta $. If
$\beta <0$, then the process $X$ behaves as $O_{\mathsf{P}}(e^{-
\beta T})$ as $T\to \infty $, hence it is non-ergodic. If $\beta >0$,
then $X_{T}=O_{\mathsf{P}}(1)$, as $T\to \infty $, and the process has
ergodic properties, see,
e.g.,~\cite{Lohvinenko-Ralchenko-Zhuchenko-2016}. The method for
the hypothesis testing of the sign of $\beta $ was developed in
\cite{Hypot}.

In this article we restrict ourselves to the case $\frac{1}{2}<H<1$. Our
proofs are based on the results of the papers \cite{KB} and
\cite{Lohvinenko-Ralchenko-2018}, which are valid only for $H\in (
\frac{1}{2},1)$ and cannot be immediately extended to the case
$H\in (0,\frac{1}{2})$. In particular, the integral representation
\eqref{eq:XviaS} below, which is the starting point for derivation of
moment generating functions\index{moment generating functions (MGF)} (MGFs) in Lemmas \ref{l:S_I_mgf} and
\ref{th:S_I_J_K_mgf}, holds for $H\in (0,\frac{1}{2})$ with different
(and more complicated) kernel $K_{H}$. Therefore, the asymptotic
behavior of the MLEs\index{maximum likelihood estimator (MLE)} in this case requires a separate study.

The paper is organized as follows. In Section
\ref{sec:model_description} we describe the model and the estimators,
and introduce the notation. Section \ref{sec:aux} contains the results
on distributions and asymptotic behavior of stochastic processes
involved into MLEs.\index{maximum likelihood estimator (MLE)} In Section~\ref{sec:main} we formulate and prove the
main results on asymptotic distributions of MLEs.\index{maximum likelihood estimator (MLE)} Some auxiliary facts
and results concerning modified Bessel functions of the first kind and
MGFs related to the normal distribution are collected in the appendices.

\section{Preliminaries}%
\label{sec:model_description}
Let $(\varOmega , \mathfrak{F},  \{  \mathfrak{F}_{t}  \}  ,
\mathsf{P})$ be a complete probability space with filtration. Let
$B^{H}=\{B^{H}_{t},\allowbreak t\geq 0\}$ be a fractional Brownian motion\index{fractional Brownian motion} on this
probability space, that is, a centered Gaussian process with the covariance
function
%
\begin{equation}
\label{eq:cov} \mathbb{E}B^{H}_{t}B^{H}_{s}
= \frac{1}{2} \bigl(t^{2H}+s^{2H}- \llvert t-s
\rrvert ^{2H} \bigr).
\end{equation}
It follows from \eqref{eq:cov} that $\mathbb{E}(B^{H}_{t}-B^{H}_{s})^{2}
= |t-s|^{2H}$. Hence, there exists a modification of $B^{H}$, which is
$\delta $-H\"{o}lder continuous for all $\delta \in (0,H)$.

We study the fractional Vasicek model,\index{fractional Vasicek model} described by the stochastic
differential equation
%
\begin{equation}
\label{eq:equation} X_{t}=x_{0}+\int_{0}^{t}
( \alpha -\beta X_{s} ) ds + \gamma B_{t}^{H},
\quad t\ge 0.
\end{equation}
The main goal is to estimate parameters $\alpha \in \mathbb{R}$ and
$\beta <0$ by continuous observations of a trajectory of $X$ on the
interval $[0,T]$. We assume that the parameters $\gamma >0$ and
$H\in (1/2,1)$ are known. This assumption is natural, because
$\gamma $ and $H$ can be obtained explicitly from the observations by
considering realized power variations,\index{power variations} see Remark~\ref{rem:est-gamma}
below.

Equation~\eqref{eq:equation} has a unique solution, which is given by
%
\begin{equation}
\label{eq:solution} X_{t}=x_{0} e^{-\beta t}+
\frac{\alpha }{\beta } \bigl(1-e^{-\beta t} \bigr) + \gamma \int
_{0}^{t} e^{-\beta (t-s)} dB_{s}^{H},
\quad t \ge 0,
\end{equation}
where $\int_{0}^{t} e^{-\beta (t-s)} dB_{s}^{H}$ is a path-wise
Riemann--Stieltjes integral. It exists due to~\cite[Proposition
A.1]{CKM}.

Following~\cite{KB2000}, for $0 < s < t \leq T$ we define
\begin{gather*}
\kappa _{H} = 2 H \varGamma (3/2 - H ) \varGamma (H + 1/2 ), \qquad
\lambda _{H} = \frac{2 H \varGamma (3 - 2 H) \varGamma (H + 1/2)}{\varGamma (3/2
- H)},
\\
k_{H}(t, s) = \kappa _{H}^{-1}
s^{1/2 - H} (t - s)^{1/2 - H}, \qquad w_{t}^{H}
= \lambda _{H}^{-1} t^{2 - 2 H}.
\end{gather*}
We introduce also three stochastic processes
\begin{align*}
P_{H}(t) &= \frac{1}{\gamma } \frac{d}{d w_{t}^{H}} \int
_{0}^{t} k _{H}(t, s)
X_{s} \, ds,
\\
Q_{H}(t) &= \frac{1}{\gamma }\frac{d}{d w_{t}^{H}} \int
_{0}^{t} k _{H}(t, s) (\alpha -
\beta X_{s}) \, ds,
\\
S_{t} &= \frac{1}{\gamma } \int_{0}^{t}
k_{H}(t, s) \, d X_{s}.
\end{align*}
Note that by~\cite[Lemma 4.1]{Lohvinenko-Ralchenko-2017}
%
\begin{equation}
\label{eq:Q_via_P} Q_{H}(t) = \frac{\alpha }{\gamma } - \beta
P_{H}(t).
\end{equation}
According to \cite[Theorem 1]{KB2000}, the process $S$ is an
$(\mathfrak{F}_{t})$-semimartingale with the decomposition
%
\begin{equation}
\label{eq:SviaM} S_{t} = \int_{0}^{t}
Q_{H}(s) \, dw_{s}^{H} +
M_{t}^{H},
\end{equation}
where $M^{H}_{t}=\int_{0}^{t} k_{H}(t,s)\,dB^{H}_{s}$ is a Gaussian
martingale\index{Gaussian martingale} with the variance function $\langle M^{H}\rangle =w^{H}$. The natural
filtrations of processes $S$ and $X$ coincide. Moreover, the process
$X$ admits the following representation
%
\begin{equation}
\label{eq:XviaS} X_{t} = \int_{0}^{t}
K_{H}(t, s) \, d S_{s},
\end{equation}
where $K_{H}(t, s) = \gamma H (2 H - 1) \int_{s}^{t} r^{H - 1/2} (r -
s)^{H - 3/2} \, dr$.

\begin{remark}
\label{rem:est-gamma}
If we observe the whole path $\{X_{t},t\in [0,T]\}$, then the parameters
$\gamma $ and $H$ can be obtained from observations explicitly in the
following way. Let $  \{  t_{i}^{(n)}   \}  $ be a partition of
$[0, T]$, such that $\sup_{i}  \llvert  t_{i+1}^{(n)} - t_{i}^{(n)}
 \rrvert  \to 0$, as $n \to \infty $. Denote $Z_{T} = \int_{0}^{T} k
_{H}(T, s) \, d X_{s}=\gamma S_{T}$. From \eqref{eq:SviaM} it follows
that $  \langle  Z   \rangle  _{T} = \gamma ^{2} w_{T}^{H}$ a.s.
Hence, the parameter $\gamma $ is calculated as the limit
\begin{equation*}
\label{eq:gamma-estimation} \gamma ^{2}= \bigl( w_{T}^{H}
\bigr)^{-1} \lim_{n} \sum
_{i} \left( Z _{t_{i + 1}^{(n)}} - Z_{t_{i}^{(n)}}
\right)^{2} \quad \text{a.s.}
\end{equation*}

The Hurst index $H$\index{Hurst index} can be evaluated as follows:
\begin{equation*}
H = \frac{1}{2} - \frac{1}{2}\lim_{n}
\log _{2} \left(\frac{\sum_{i=1}
^{2n-1}  \left(X_{t_{i + 1}^{(2n)}} - 2 X_{t_{i}^{(2n)}} + X_{t_{i-1}
^{(2n)}}  \right)^{2}}{\sum_{i=1}^{n-1}  \left(X_{t_{i + 1}^{(n)}} - 2 X
_{t_{i}^{(n)}} + X_{t_{i-1}^{(n)}}  \right)^{2}} \right) \quad \text{a.s.,}
\end{equation*}
see, e.g., \cite[Sec.~3.1]{KMR2017}. There exist several other
methods of the Hurst index evaluation based on power variations\index{power variations} of
$X$. We refer to the books~\cite{BLL2014,KMR2017} for further
information on this subject.
\end{remark}

Applying the analog of the Girsanov formula for a fractional Brownian
motion\index{fractional Brownian motion} (\cite[Theorem 3]{KB2000}, see also \cite{KMM}) and
\eqref{eq:SviaM}, one can obtain the likelihood ratio $\frac{d
\mathsf{P}_{\alpha ,\beta }(T)}{d\mathsf{P}_{0,0}(T)}$ for the
probability measure $\mathsf{P}_{\alpha ,\beta }(T)$ corresponding to
our model and the probability measure $\mathsf{P}_{0,0}(T)$ corresponding
to the model with zero drift~\cite{Lohvinenko-Ralchenko-2017}:
%
\begin{equation}
\label{eq:likelihood_ratio} %
\begin{aligned}[b] \frac{d\mathsf{P}_{\alpha ,\beta }(T)}{d\mathsf{P}_{0,0}(T)} &= \exp \Biggl
\{ \int_{0}^{T} Q_{H}(t) \, d
S_{t} - \frac{1}{2} \int_{0}
^{T} \bigl( Q_{H}(t) \bigr)^{2} \, d
w_{t}^{H} \Biggr\}
\\[-2pt]
&= \exp \lleft\{ \frac{\alpha }{\gamma } S_{T} - \beta \int
_{0}^{T} P _{H}(t) \, d
S_{t} - \frac{\alpha ^{2}}{2 \gamma ^{2}} w_{T}^{H}
\rright .
\\[-2pt]
&\quad + \lleft . \frac{\alpha \beta }{\gamma } \int_{0}^{T}
P_{H}(t) \, d w_{t}^{H} -
\frac{\beta ^{2}}{2} \int_{0}^{T} \bigl(
P_{H}(t) \bigr) ^{2} \, d w_{t}^{H}
\rright\} . \end{aligned} %
\end{equation}
MLEs\index{maximum likelihood estimator (MLE)} of parameters $\alpha $ and $\beta $ maximize
\eqref{eq:likelihood_ratio} and have the following
form~\cite{Lohvinenko-Ralchenko-2017}:
%
\begin{equation}
\label{eq:mle} \widehat{\alpha }_{T} = \frac{ S_{T} K_{T}- I_{T} J_{T}}{w_{T}^{H} K
_{T}-J_{T}^{2}}
\gamma , \qquad \widehat{\beta }_{T} = \frac{ S_{T} J_{T}-w_{T}^{H} I_{T} }{w_{T}^{H}
K_{T}-J_{T}^{2}},
\end{equation}
where
\begin{equation*}
I_{T} = \int_{0}^{T}
P_{H}(t) \, d S_{t}, \qquad J_{T} = \int
_{0}^{T} P _{H}(t) \, d
w_{t}^{H}, \qquad K_{T} = \int
_{0}^{T} \bigl( P_{H}(t) \bigr)
^{2} \, d w_{t}^{H}.
\end{equation*}
It is worth noting that using the definition of $P_{H}(t)$ one can
easily represent $J_{T}$ in the following way
\begin{equation*}
J_{T} = \frac{1}{\gamma } \int_{0}^{T}
k_{H}(T, s) X_{s} \, d s.
\end{equation*}

\section{Auxiliary results}%
\label{sec:aux}
In this section we find exact and asymptotic distributions of the
statistics $S_{T}$, $I_{T}$, $J_{T}$, $K_{T}$ and related random
variables\index{random variables} and vectors.

We start with the bivariate MGF\index{bivariate MGF} of the vector $(S_{T}, I_{T})$. For the case
$\beta >0$, it was derived in
\cite[Lemma~3.3]{Lohvinenko-Ralchenko-2018}. However, the same proof is
valid for the case $\beta <0$. The following result is a reformulation
of \cite[Lemma~3.3]{Lohvinenko-Ralchenko-2018}, obtained by applying
the formula \eqref{eq:even} from Appendix~\ref{sec:Bessel-function}.

\begin{lemma}
\label{l:S_I_mgf}
The moment generating function of $(S_{T}, I_{T})$ equals
\begin{align*}
m_{1}^{(\alpha ,\beta )}(\xi _{1}, \xi
_{2}) &= \mathbb{E} \bigl[ \exp \{ \xi _{1}
S_{T} + \xi _{2} I_{T} \} \bigr]
\\
&= D^{(\alpha ,\beta )}(\xi _{2})^{-\frac{1}{2}} \exp \Biggl\{
\frac{1}{8D
^{(\alpha ,\beta )}(\xi _{2})}\sum_{i=1}^{4}
A_{i}^{(\alpha ,\beta )}( \xi _{1},\xi
_{2}) - \frac{\xi _{2} T}{2} \Biggr\} ,
\end{align*}
where
%
\begin{align}
\label{eq:D_def} D^{(\alpha ,\beta )}(\xi _{2}) &= \biggl(1 -
\frac{\xi _{2}}{2 \beta } \biggr) ^{2} + \frac{\xi _{2}^{2}}{4 \beta ^{2}}
e^{-2\beta T} + \biggl( \frac{
\xi _{2}}{\beta } - \frac{\xi _{2}^{2}}{2 \beta ^{2}}
\biggr) \frac{(-
\beta ) \pi T}{4 \sin \pi H} e^{-\beta T}
\nonumber \\
&\quad \times \biggl[ I_{-H} \biggl( -\frac{\beta T}{2} \biggr)
I_{H - 1} \biggl( -\frac{\beta T}{2} \biggr) + I_{1 - H}
\biggl( -\frac{\beta
T}{2} \biggr) I_{H} \biggl( -
\frac{\beta T}{2} \biggr) \biggr],
\end{align}
%
\begin{align}
A_{1}^{(\alpha ,\beta )}(\xi _{1},\xi
_{2}) &= \xi _{2} \bigl(c_{1} \bigl(
\tfrac{\alpha }{\beta } \bigr)\xi _{1} - c_{2} \bigl(
\tfrac{\alpha
}{\beta } \bigr)\xi _{2} \bigr) (-\beta )^{H-1}
T^{1 - H} e^{-\frac{3
\beta T}{2}} I_{1 - H} \bigl( -
\tfrac{\beta T}{2} \bigr), \label{eq:A_1_def}
\\
A_{2}^{(\alpha ,\beta )}(\xi _{1},\xi _{2})
&= \bigl( \xi _{1}^{2} c _{3} - \xi
_{1} \xi _{2} c_{4} \bigl(
\tfrac{\alpha }{\beta } \bigr) + \xi _{2}^{2}
c_{5} \bigl(\tfrac{\alpha }{\beta } \bigr) \bigr) T^{2 - 2H}
e^{-\beta T}
\nonumber
\\*
&\quad \times I_{1 - H} \bigl( -\tfrac{\beta T}{2} \bigr)
I_{H - 1} \bigl( -\tfrac{\beta T}{2} \bigr), \label{eq:A_2_def}
\\
A_{3}^{(\alpha ,\beta )}(\xi _{1},\xi _{2})
&= \xi _{2} ( \xi _{2} - 2 \beta ) c_{6}
\bigl(\tfrac{\alpha }{\beta } \bigr) (-\beta )^{2H-1} T e^{-\beta T}
I_{1 - H} \bigl( -\tfrac{\beta T}{2} \bigr) I_{-H} \bigl(
-\tfrac{\beta T}{2} \bigr), \label{eq:A_3_def}
\\
A_{4}^{(\alpha ,\beta )}(\xi _{1},\xi _{2})
&= \bigl(c_{1} \bigl(\tfrac{
\alpha }{\beta } \bigr)\xi
_{1}-c_{2} \bigl(\tfrac{\alpha }{\beta } \bigr)\xi
_{2} \bigr) (\xi _{2} - 2\beta ) (-\beta
)^{H-1}
\nonumber
\\*
&\quad \times T^{1 - H} e^{-\frac{\beta T}{2}} I_{1 - H} \bigl( -
\tfrac{
\beta T}{2} \bigr), \label{eq:A_4_def}
\end{align}
\begin{align*}
c_{1} \bigl(\tfrac{\alpha }{\beta } \bigr) &= \bigl(
x_{0} - \tfrac{
\alpha }{\beta } \bigr) 4\rho _{H}, &
c_{4} \bigl(\tfrac{\alpha }{
\beta } \bigr) &= \bigl( x_{0}
- \tfrac{\alpha }{\beta } \bigr)\rho _{H} 2^{2H + 1} \varGamma
(H),
\\
c_{2} \bigl(\tfrac{\alpha }{\beta } \bigr) &= \bigl( x_{0}
- \tfrac{
\alpha }{\beta } \bigr)^{2} \frac{\lambda _{H}^{*} 2^{2H + 1} \rho _{H}
^{2}}{\varGamma (1 - H)}, &
c_{5} \bigl(\tfrac{\alpha }{\beta } \bigr) &= \bigl(
x_{0} - \tfrac{\alpha }{\beta } \bigr)^{2}
\frac{\lambda _{H}
^{*} 2^{4H - 1} \rho _{H}^{2} \varGamma (H)}{\varGamma (1 - H)},
\\
c_{3} &= \frac{2 \varGamma (H) \varGamma (1 - H)}{\lambda _{H}^{*}}, & c _{6} \bigl(
\tfrac{\alpha }{\beta } \bigr) &= \biggl( x_{0} - \frac{
\alpha }{\beta }
\biggr)^{2} 2 \lambda _{H}^{*} \rho
_{H}^{2},
\end{align*}
\begin{equation*}
\lambda _{H}^{*} = \frac{\lambda _{H}}{2 - 2H}, \qquad \rho
_{H} = \frac{\sqrt{\pi } \varGamma (3/2 - H)}{\gamma \kappa _{H}}.
\end{equation*}
The domain of the function $m_{1}^{(\alpha ,\beta )}$ equals $  \{  (
\xi _{1},\xi _{2})\in \mathbb{R}^{2}:D^{(\alpha ,\beta )}(\xi _{2})>0  \}
$.
\end{lemma}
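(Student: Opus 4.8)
The plan is to reduce everything to the already-established ergodic case. In \cite[Lemma~3.3]{Lohvinenko-Ralchenko-2018} the bivariate MGF of $(S_T,I_T)$ for $\beta>0$ was obtained along the following lines. By \eqref{eq:SviaM} and \eqref{eq:Q_via_P} the pair $(S_T,I_T)$ is an affine-quadratic functional of the Gaussian martingale $M^H$ with quadratic variation $w^H$: indeed $S_T=\int_0^TQ_H(s)\,dw_s^H+M_T^H$ and $I_T=\int_0^TP_H(t)\,dS_t$, while $P_H$ itself solves a linear integral equation driven by $S$ (equivalently by $M^H$). Consequently $\mathbb E[\exp\{\xi_1S_T+\xi_2I_T\}]$ is the Laplace transform of a linear-plus-quadratic form in a Gaussian process, which one evaluates by a Cameron--Martin/Girsanov change of measure; this reduces the computation to a second-order linear ODE (a Sturm--Liouville problem) for the associated mean and variance corrections. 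After the time change $w_t^H=\lambda_H^{-1}t^{2-2H}$ the fundamental solutions of that ODE are the modified Bessel functions $I_{\pm H}$ and $I_{\pm(1-H)}$ with argument proportional to $\beta t$, and the normalising constant is a Fredholm-type determinant, which is $D^{(\alpha,\beta)}(\xi_2)$ up to the reformulation below.

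The key observation is that none of these steps uses the sign of $\beta$: the ODE, its Bessel solutions, the boundary evaluation, and the determinant are all analytic in $\beta$, and the only role of positivity is to delimit the region of convergence of the Gaussian integral. Hence the formula of \cite[Lemma~3.3]{Lohvinenko-Ralchenko-2018} continues to hold for $\beta<0$, with domain equal to the set where the quadratic form in the exponent is negative definite, i.e. $\{(\xi_1,\xi_2):D^{(\alpha,\beta)}(\xi_2)>0\}$. It then remains to rewrite the resulting expression in a form adapted to $\beta<0$: since $-\beta T/2>0$, the natural arguments of the Bessel functions become $-\beta T/2$, and the Bessel products occurring in $D$ and in the $A_i$ must be re-expressed accordingly. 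This is done by invoking the identity \eqref{eq:even} from Appendix~\ref{sec:Bessel-function}, which records the behaviour of the relevant Bessel products under a sign change of the argument; substituting it into \cite[Lemma~3.3]{Lohvinenko-Ralchenko-2018} produces $D^{(\alpha,\beta)}$ as in \eqref{eq:D_def}, the coefficients $A_1^{(\alpha,\beta)},\dots,A_4^{(\alpha,\beta)}$ as in \eqref{eq:A_1_def}--\eqref{eq:A_4_def}, and the constants $c_1,\dots,c_6$, $\lambda_H^{*}$, $\rho_H$ in the stated form.

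The main obstacle is bookkeeping rather than conceptual: one must track the factors $e^{-\beta T}$, the powers $(-\beta)^{H-1}$, $(-\beta)^{2H-1}$ and $T^{1-H}$, $T^{2-2H}$, and match each constant $c_j(\alpha/\beta)$ after the Bessel substitution, while checking that the dependence on $x_0$ enters only through $x_0-\alpha/\beta$ (so that the $\alpha=0$ fractional Ornstein--Uhlenbeck formulas are recovered as a special case). The substantive analytic content --- existence and Bessel form of the ODE solutions, and the determinant computation --- is inherited from \cite{Lohvinenko-Ralchenko-2018} and need not be redone.
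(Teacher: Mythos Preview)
Your proposal is correct and follows essentially the same approach as the paper: observe that the proof of \cite[Lemma~3.3]{Lohvinenko-Ralchenko-2018} does not depend on the sign of $\beta$, then apply the even-function identity \eqref{eq:even} to rewrite the Bessel expressions with positive argument $-\beta T/2$. Your sketch of the Cameron--Martin/ODE machinery behind the cited lemma is more detailed than what the paper records, but the reduction itself is identical.
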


The following lemma gives a joint MGF\index{joint MGF} of $  ( S_{T}, I_{T}, J_{T},
K_{T}   )$.
%
\begin{lemma}
\label{th:S_I_J_K_mgf}
The moment generating function of $  ( S_{T}, I_{T}, J_{T}, K_{T}
  )$ equals
\begin{align*}
m_{2}(\theta _{1}, \theta _{2}, \theta
_{3}, \theta _{4}) &= \mathbb{E} \bigl[ \exp \{ \theta
_{1} S_{T} + \theta _{2}
I_{T} + \theta _{3} J_{T} + \theta
_{4} K_{T} \} \bigr]
\\
&= m_{1}^{(\alpha _{1}, \beta _{1})} \biggl( \theta _{1} +
\frac{\alpha
-\alpha _{1}}{\gamma }, \theta _{2} - \beta + \beta _{1}
\biggr) \exp \biggl\{ \frac{\alpha _{1}^{2} - \alpha ^{2}}{2 \gamma ^{2}} w_{T}^{H}
\biggr\} ,
\end{align*}
where
\begin{equation*}
\alpha _{1} = - \frac{\gamma \theta _{3} + \alpha \beta }{\sqrt{\beta
^{2} - 2 \theta _{4}}}, \qquad \beta
_{1} = - \sqrt{\beta ^{2} - 2 \theta
_{4}}.
\end{equation*}
The domain of the function $m_{2}$ equals
\begin{equation*}
\left\{ (\theta _{1},\theta _{2},\theta
_{3},\theta _{4})\in \mathbb{R} ^{4} :
\theta _{4}<\beta ^{2}/2, D^{(\alpha ,\beta )} \left(\theta
_{2}- \beta -\sqrt{\beta ^{2} - 2 \theta
_{4}} \right)>0 \right\} ,
\end{equation*}
where $D^{(\alpha ,\beta )}$ is defined in \eqref{eq:D_def}.
\end{lemma}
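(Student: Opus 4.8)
The plan is to deduce the four-parameter MGF from the two-parameter one of Lemma~\ref{l:S_I_mgf} by absorbing the variables conjugate to $J_T$ and $K_T$ into a Radon--Nikodym density of Girsanov type; throughout write $\mathbb{E}_{a,b}$ for expectation under $\mathsf{P}_{a,b}(T)$, so $m_1^{(a,b)}$ and $m_2$ refer to $\mathsf{P}_{\alpha,\beta}(T)$. The starting observation is that the exponent in \eqref{eq:likelihood_ratio} is, modulo the deterministic term $-\frac{\alpha^2}{2\gamma^2}w_T^H$, a linear combination of precisely $S_T,I_T,J_T,K_T$: using $Q_H=\alpha/\gamma-\beta P_H$ from \eqref{eq:Q_via_P} and the definitions of $I_T,J_T,K_T$, formula \eqref{eq:likelihood_ratio} reads
\begin{equation*}
\frac{d\mathsf{P}_{\alpha,\beta}(T)}{d\mathsf{P}_{0,0}(T)}=\exp\left\{\frac{\alpha}{\gamma}S_T-\beta I_T+\frac{\alpha\beta}{\gamma}J_T-\frac{\beta^2}{2}K_T-\frac{\alpha^2}{2\gamma^2}w_T^H\right\}.
\end{equation*}
Since every $\mathsf{P}_{a,b}(T)$ is equivalent to $\mathsf{P}_{0,0}(T)$ by the fractional Girsanov theorem, for arbitrary $(\alpha_1,\beta_1)$ the density of $\mathsf{P}_{\alpha_1,\beta_1}(T)$ with respect to $\mathsf{P}_{\alpha,\beta}(T)$ is the quotient of two such expressions,
\begin{equation*}
\frac{d\mathsf{P}_{\alpha_1,\beta_1}(T)}{d\mathsf{P}_{\alpha,\beta}(T)}=\exp\left\{\frac{\alpha_1-\alpha}{\gamma}S_T-(\beta_1-\beta)I_T+\frac{\alpha_1\beta_1-\alpha\beta}{\gamma}J_T-\frac{\beta_1^2-\beta^2}{2}K_T-\frac{\alpha_1^2-\alpha^2}{2\gamma^2}w_T^H\right\}.
\end{equation*}

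Next I would choose $(\alpha_1,\beta_1)$ so that the $J_T$- and $K_T$-coefficients of this density equal $\theta_3$ and $\theta_4$. The conditions $\frac{\alpha_1\beta_1-\alpha\beta}{\gamma}=\theta_3$ and $-\frac{\beta_1^2-\beta^2}{2}=\theta_4$ force $\beta_1^2=\beta^2-2\theta_4$, solvable with $\beta_1$ real exactly when $\theta_4<\beta^2/2$; take $\beta_1=-\sqrt{\beta^2-2\theta_4}$ (the branch reducing to $\beta_1=\beta<0$ at $\theta_4=0$) and then $\alpha_1=(\gamma\theta_3+\alpha\beta)/\beta_1=-(\gamma\theta_3+\alpha\beta)/\sqrt{\beta^2-2\theta_4}$ --- exactly the values in the statement. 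For this choice, solving the density identity for $e^{\theta_3 J_T+\theta_4 K_T}$, multiplying by $e^{\theta_1 S_T+\theta_2 I_T}$ and taking $\mathbb{E}_{\alpha,\beta}$ gives
\begin{align*}
m_2(\theta_1,\theta_2,\theta_3,\theta_4)
&=e^{\frac{\alpha_1^2-\alpha^2}{2\gamma^2}w_T^H}\,\mathbb{E}_{\alpha,\beta}\!\left[\frac{d\mathsf{P}_{\alpha_1,\beta_1}(T)}{d\mathsf{P}_{\alpha,\beta}(T)}\,e^{(\theta_1+\frac{\alpha-\alpha_1}{\gamma})S_T+(\theta_2-\beta+\beta_1)I_T}\right]\\
&=e^{\frac{\alpha_1^2-\alpha^2}{2\gamma^2}w_T^H}\,\mathbb{E}_{\alpha_1,\beta_1}\!\left[e^{(\theta_1+\frac{\alpha-\alpha_1}{\gamma})S_T+(\theta_2-\beta+\beta_1)I_T}\right]\\
&=e^{\frac{\alpha_1^2-\alpha^2}{2\gamma^2}w_T^H}\,m_1^{(\alpha_1,\beta_1)}\!\left(\theta_1+\frac{\alpha-\alpha_1}{\gamma},\ \theta_2-\beta+\beta_1\right),
\end{align*}
where $w_T^H$ factors out as a deterministic quantity, the middle step is the change of measure, and the last is the definition of $m_1^{(\alpha_1,\beta_1)}$ in Lemma~\ref{l:S_I_mgf}. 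This is the asserted formula.

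It remains to identify the domain. The computation above is legitimate, and both sides are finite, exactly when $\alpha_1,\beta_1$ are well-defined real numbers --- i.e.\ $\theta_4<\beta^2/2$ --- and the pair $(\theta_1+\frac{\alpha-\alpha_1}{\gamma},\ \theta_2-\beta+\beta_1)$ lies in the domain of $m_1^{(\alpha_1,\beta_1)}$ from Lemma~\ref{l:S_I_mgf}, that is $D^{(\alpha_1,\beta_1)}(\theta_2-\beta+\beta_1)>0$; inserting $\beta_1=-\sqrt{\beta^2-2\theta_4}$ brings this to the condition displayed in the statement. I expect the one genuinely delicate point to be this equivalence of finiteness: one must check that the change of measure neither creates nor destroys integrability of $e^{\theta_1 S_T+\theta_2 I_T+\theta_3 J_T+\theta_4 K_T}$, which holds because $e^{\frac{\alpha_1^2-\alpha^2}{2\gamma^2}w_T^H}$ is a finite positive constant and $d\mathsf{P}_{\alpha_1,\beta_1}(T)/d\mathsf{P}_{\alpha,\beta}(T)$ is an a.s.\ strictly positive density; the remaining manipulations are routine algebra.
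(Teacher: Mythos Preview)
Your argument is correct and is precisely the Girsanov change-of-measure reduction that underlies the referenced proof of \cite[Theorem~3.4]{Lohvinenko-Ralchenko-2018}: rewrite the likelihood ratio \eqref{eq:likelihood_ratio} as a linear combination of $S_T,I_T,J_T,K_T$, choose $(\alpha_1,\beta_1)$ so that the $J_T$- and $K_T$-coefficients match $\theta_3,\theta_4$, and pass to $\mathsf{P}_{\alpha_1,\beta_1}$ to reduce to $m_1^{(\alpha_1,\beta_1)}$. One small remark on the domain: the condition you obtain is $D^{(\alpha_1,\beta_1)}(\theta_2-\beta+\beta_1)>0$, and since $D^{(\alpha,\beta)}$ in \eqref{eq:D_def} depends only on $\beta$ (not on $\alpha$), the superscript in the statement should be read with $\beta_1$ in place of $\beta$; your identification of the argument $\theta_2-\beta-\sqrt{\beta^2-2\theta_4}$ is exactly right.
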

\begin{proof}
The proof is the same as for
\cite[Theorem~3.4]{Lohvinenko-Ralchenko-2018}.
\end{proof}

\begin{lemma}
\label{l:S_asymptotic}
Under stated conditions the process $S_{T}$ has the normal asymptotic
distribution as $T \to \infty $, namely
%
\begin{equation}
\label{eq:S_asymptotic} T^{H - 1/2} e^{\beta T} S_{T}
\xrightarrow{d}\mathcal{N} \biggl( \frac{
  ( x_{0} - \frac{\alpha }{\beta }   ) \rho _{H} (-\beta )^{H
- 1/2}}{\sqrt{\pi }}, \frac{\varGamma (H) \varGamma (1 - H)}{2 \pi (-
\beta )\lambda _{H}^{*}}
\biggr).
\end{equation}
\end{lemma}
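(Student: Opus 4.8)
The plan is to derive the limiting distribution of $T^{H-1/2}e^{\beta T}S_T$ directly from the exact MGF of $S_T$ supplied by Lemma~\ref{l:S_I_mgf}, namely $m_1^{(\alpha,\beta)}(\xi_1,0)$, by rescaling and taking $T\to\infty$. Concretely, I would set $\xi_2=0$ and $\xi_1=u\,T^{H-1/2}e^{\beta T}$ for a fixed real $u$, and show that
\begin{equation*}
\mathbb{E}\bigl[\exp\{u\,T^{H-1/2}e^{\beta T}S_T\}\bigr]=m_1^{(\alpha,\beta)}\bigl(u\,T^{H-1/2}e^{\beta T},0\bigr)
\longrightarrow \exp\Bigl\{u\,\mu+\tfrac{1}{2}u^2\sigma^2\Bigr\}
\end{equation*}
as $T\to\infty$, where $\mu=\bigl(x_0-\tfrac{\alpha}{\beta}\bigr)\rho_H(-\beta)^{H-1/2}/\sqrt{\pi}$ and $\sigma^2=\varGamma(H)\varGamma(1-H)/\bigl(2\pi(-\beta)\lambda_H^*\bigr)$ are the proposed mean and variance. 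Convergence of MGFs in a neighbourhood of $0$ to the MGF of a Gaussian law yields convergence in distribution, which is exactly \eqref{eq:S_asymptotic}.

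The computation simplifies considerably because $\xi_2=0$: then $D^{(\alpha,\beta)}(0)=1$, and among the four terms $A_i^{(\alpha,\beta)}(\xi_1,0)$ only those not carrying an overall factor of $\xi_2$ survive. Inspecting \eqref{eq:A_1_def}--\eqref{eq:A_4_def}, one sees $A_1^{(\alpha,\beta)}(\xi_1,0)=0$ and $A_3^{(\alpha,\beta)}(\xi_1,0)=0$, while
\begin{equation*}
A_2^{(\alpha,\beta)}(\xi_1,0)=\xi_1^2\,c_3\,T^{2-2H}e^{-\beta T}I_{1-H}\bigl(-\tfrac{\beta T}{2}\bigr)I_{H-1}\bigl(-\tfrac{\beta T}{2}\bigr),
\end{equation*}
\begin{equation*}
A_4^{(\alpha,\beta)}(\xi_1,0)=-2\beta\,c_1\bigl(\tfrac{\alpha}{\beta}\bigr)\,\xi_1\,(-\beta)^{H-1}T^{1-H}e^{-\frac{\beta T}{2}}I_{1-H}\bigl(-\tfrac{\beta T}{2}\bigr).
\end{equation*}
Hence $\log m_1^{(\alpha,\beta)}(\xi_1,0)=\tfrac{1}{8}\bigl(A_2^{(\alpha,\beta)}(\xi_1,0)+A_4^{(\alpha,\beta)}(\xi_1,0)\bigr)$, a genuine quadratic in $\xi_1$. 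Substituting $\xi_1=u\,T^{H-1/2}e^{\beta T}$, the $\xi_1^2$ term scales like $u^2 T^{2H-1}e^{2\beta T}\cdot T^{2-2H}e^{-\beta T}=u^2 T\,e^{\beta T}$ times the Bessel product, and the $\xi_1$ term scales like $u\,T^{H-1/2}e^{\beta T}\cdot T^{1-H}e^{-\beta T/2}=u\,T^{1/2}e^{\beta T/2}$ times $I_{1-H}(-\beta T/2)$.

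The core of the argument is then the asymptotics of the modified Bessel functions for large argument: $I_\nu(z)\sim e^{z}/\sqrt{2\pi z}$ as $z\to+\infty$, which I would quote from the appendix on Bessel functions (the excerpt refers to formulas collected there). With $z=-\beta T/2\to+\infty$ one gets $I_{1-H}(-\beta T/2)I_{H-1}(-\beta T/2)\sim e^{-\beta T}/\bigl(2\pi(-\beta T/2)\bigr)=e^{-\beta T}/\bigl(-\pi\beta T\bigr)$ and $I_{1-H}(-\beta T/2)\sim e^{-\beta T/2}/\sqrt{-\pi\beta T}$. Plugging these in, the explicit powers of $T$ and exponentials cancel exactly, leaving $\log m_1^{(\alpha,\beta)}(u T^{H-1/2}e^{\beta T},0)\to u\mu+\tfrac12 u^2\sigma^2$ with the stated constants once $c_1(\tfrac{\alpha}{\beta})=4\rho_H(x_0-\tfrac{\alpha}{\beta})$, $c_3=2\varGamma(H)\varGamma(1-H)/\lambda_H^*$ and $(-\beta)^{H-1}\cdot(-\beta)=(-\beta)^{H}$, etc., are tracked carefully; this is a routine but bookkeeping-heavy identification of coefficients.

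The main obstacle is twofold. First, one must verify that for every fixed $u$ the point $\xi_1=u T^{H-1/2}e^{\beta T}$ (with $\xi_2=0$) lies in the domain $\{D^{(\alpha,\beta)}(\xi_2)>0\}$ of $m_1^{(\alpha,\beta)}$ for all $T$ large enough — but since $D^{(\alpha,\beta)}(0)=1>0$ identically, the domain condition is automatic here, so this is not actually a difficulty. The real work is the careful asymptotic expansion: one needs the Bessel asymptotics with enough uniformity (a ratio $I_\nu(z)/I_\mu(z)\to1$ type statement, or the first-order expansion $I_\nu(z)=e^z(2\pi z)^{-1/2}(1+O(1/z))$) so that the error terms, after multiplication by the growing prefactors $T\,e^{\beta T}$ and $T^{1/2}e^{\beta T/2}$, still vanish — here the compensating exponential decay in the Bessel functions is exactly what saves the day, and one should present this cancellation cleanly. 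Everything else (reducing the $A_i$, collecting constants, invoking the MGF continuity theorem) is straightforward.
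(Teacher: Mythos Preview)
Your proposal is correct and is essentially identical to the paper's own proof: set $\xi_2=0$, $\xi_1=\theta T^{H-1/2}e^{\beta T}$ in Lemma~\ref{l:S_I_mgf}, observe $D^{(\alpha,\beta)}(0)=1$ and $A_1=A_3=0$, then apply the Bessel asymptotics~\eqref{eq:I_asymptotic} to $A_2$ and $A_4$ to recover the Gaussian MGF with the stated mean and variance. The only cosmetic difference is that the paper writes $-2\beta(-\beta)^{H-1}=2(-\beta)^H$ directly in $A_4$, and your side remarks about the domain and the uniformity of the Bessel expansion are already implicitly handled there.
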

\begin{proof}
We obtain the distribution via MGF. Using Lemma~\ref{l:S_I_mgf} we have
\begin{equation*}
\mathbb{E} \bigl[ \exp \bigl\{ \theta T^{H - 1/2} e^{\beta T}
S_{T} \bigr\} \bigr] = m_{1} \bigl( \theta
T^{H - 1/2} e^{\beta T}, 0 \bigr).
\end{equation*}
Taking each term of the function $m_{1}$ separately with $\xi _{1} =
\theta T^{H - 1/2} e^{\beta T}$ and $\xi _{2} = 0$ and applying
\eqref{eq:I_asymptotic} we obtain that $D(\xi _{2}) = 1$, $A_{1}(\xi
_{1}, \xi _{2}) = A_{3}(\xi _{1}, \xi _{2})=0$,
\begin{align*}
A_{2}(\xi _{1}, \xi _{2}) &= \xi
_{1}^{2} c_{3} T^{2 - 2H}
e^{-\beta T} I _{1 - H} \biggl( -\frac{\beta T}{2} \biggr)
I_{H - 1} \biggl( -\frac{
\beta T}{2} \biggr)
\\
&= \theta ^{2} c_{3} T e^{\beta T}
I_{1 - H} \biggl( - \frac{\beta T}{2} \biggr) I_{H - 1}
\biggl( -\frac{\beta T}{2} \biggr)
\\
&\to \frac{c_{3}}{\pi (-\beta )} \theta ^{2}, \quad \text{as }T\to \infty
,
\end{align*}
{and}
\begin{align*}
A_{4}(\xi _{1}, \xi _{2}) &= \xi
_{1} 2 c _{1} \bigl(\tfrac{\alpha }{\beta } \bigr) (-
\beta )^{H} T^{1 - H} e^{-\frac{
\beta T}{2}} I_{1 - H}
\biggl( -\frac{\beta T}{2} \biggr)
\\
&= \theta 2 c_{1} \bigl(\tfrac{\alpha }{\beta } \bigr) (-\beta
)^{H} T ^{1/2} e^{\frac{\beta T}{2}} I_{1 - H}
\biggl( -\frac{\beta T}{2} \biggr)
\\
&\to \frac{2 c_{1}  (\tfrac{\alpha }{\beta }  ) (-\beta )^{H -
1/2}}{\sqrt{\pi }} \theta , \quad \text{as }T\to \infty .
\end{align*}
Hence
\begin{equation*}
\mathbb{E} \bigl[ \exp \bigl\{ \theta T^{H - 1/2} e^{\beta T}
S_{T} \bigr\} \bigr] \to \exp \biggl\{ \frac{c_{3}}{8 \pi (-\beta )} \theta
^{2} + \frac{c_{1}  (\tfrac{\alpha }{\beta }  ) (-\beta )^{H
- 1/2}}{4 \sqrt{\pi }} \theta \biggr\} ,
\end{equation*}
as $T \to \infty $. This means that
\begin{equation*}
T^{H - 1/2} e^{\beta T} S_{T} \xrightarrow{d}
\mathcal{N} \biggl( \frac{c
_{1}  (\tfrac{\alpha }{\beta }  ) (-\beta )^{H - 1/2}}{4 \sqrt{
\pi }}, \frac{c_{3}}{4 \pi (-\beta )} \biggr), \quad
\text{as }T \to \infty ,
\end{equation*}
which is equivalent to \eqref{eq:S_asymptotic}.
\end{proof}

The following result is crucial for the derivation of the joint
asymptotic distribution of MLE.

\begin{lemma}
\label{l:I+bK+K_mgf}
The vector of main components of the MLE has the following behavior
%
\begin{equation}
\label{eq:I+bK+K_mgf} %
\begin{pmatrix}
T^{H-1}  (S_{T}+\beta J_{T}-\frac{\alpha }{\gamma }w^{H}_{T}  )
\\
e^{\beta T} (I_{T} + \beta K_{T})
\\
e^{2 \beta T} K_{T}
\end{pmatrix} %
\xrightarrow{d}
\begin{pmatrix}
\xi
\\
\eta \zeta
\\
\zeta ^{2}
\end{pmatrix} %
, \quad \text{as } T\to \infty ,
\end{equation}
where $\xi $, $\eta $, $\zeta $ are independent and $\xi
\stackrel{d}{=}\mathcal{N}  (0, \lambda _{H}^{-1}  )$,
$\eta \stackrel{d}{=}\mathcal{N}(0, 1)$,
%
\begin{equation}
\label{eq:I+bK_asymptotic_zeta} \zeta \stackrel{d} {=}\mathcal{N} \biggl( \frac{  ( x_{0} - \frac{
\alpha }{\beta }   )\rho _{H}\sqrt{\lambda _{H}^{*} } (-\beta )^{H
- 1}}{\sqrt{2\pi }},
\frac{1}{4 \beta ^{2} \sin \pi H} \biggr).
\end{equation}
\end{lemma}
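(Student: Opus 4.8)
The plan is to prove \eqref{eq:I+bK+K_mgf} by showing convergence of moment generating functions (MGFs) on a neighbourhood of the origin and then invoking the multivariate continuity theorem. For $(u_1,u_2,u_3)$ near $0$ I would first rewrite the MGF of the left-hand vector, pulling out the deterministic part coming from $-\tfrac{\alpha}{\gamma}w_T^H$, as
\begin{align*}
G_T(u_1,u_2,u_3)
&= \mathbb{E}\exp\biggl\{ u_1 T^{H-1}\Bigl(S_T+\beta J_T-\tfrac{\alpha}{\gamma}w_T^H\Bigr) + u_2 e^{\beta T}(I_T+\beta K_T) + u_3 e^{2\beta T}K_T \biggr\}
\\
&= m_2(\theta_1,\theta_2,\theta_3,\theta_4)\exp\Bigl\{-\tfrac{u_1\alpha}{\gamma}T^{H-1}w_T^H\Bigr\},
\end{align*}
with $\theta_1=u_1T^{H-1}$, $\theta_2=u_2e^{\beta T}$, $\theta_3=u_1\beta T^{H-1}$, $\theta_4=u_2\beta e^{\beta T}+u_3e^{2\beta T}$. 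Using Lemma~\ref{th:S_I_J_K_mgf}, $G_T$ equals $m_1^{(\alpha_1,\beta_1)}(\xi_1^{*},\xi_2^{*})\exp\{(\alpha_1^2-\alpha^2)w_T^H/(2\gamma^2)\}\exp\{-\tfrac{u_1\alpha}{\gamma}T^{H-1}w_T^H\}$, where $\xi_1^{*}=\theta_1+\tfrac{\alpha-\alpha_1}{\gamma}$ and $\xi_2^{*}=\theta_2-\beta+\beta_1$, so it remains to analyse the limit $T\to\infty$.

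Next I would expand the auxiliary parameters. Since $\beta<0$, $\theta_4\to0$, hence
\[
\beta_1=-\sqrt{\beta^2-2\theta_4}=\beta-u_2e^{\beta T}-\tfrac1\beta\Bigl(u_3+\tfrac{u_2^2}{2}\Bigr)e^{2\beta T}+o(e^{2\beta T}),\qquad
\alpha_1=-\frac{\beta\gamma u_1T^{H-1}+\alpha\beta}{\sqrt{\beta^2-2\theta_4}}=\alpha+\gamma u_1T^{H-1}+O(e^{\beta T}),
\]
so $\alpha_1/\beta_1\to\alpha/\beta$, whence every constant $c_j$ entering $m_1$ converges to its value at $\alpha/\beta$; substituting one finds $\xi_1^{*}=\tfrac{\delta}{s}(u_1T^{H-1}+\tfrac{\alpha}{\gamma})=O(e^{\beta T})$ (with $s=\sqrt{\beta^2-2\theta_4}$, $\delta=s+\beta$) and $\xi_2^{*}=-\tfrac1\beta(u_3+\tfrac{u_2^2}{2})e^{2\beta T}+o(e^{2\beta T})$. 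The crucial points are that the $u_1T^{H-1}$ contributions cancel in $\xi_1^{*}$ and that $\xi_2^{*}$ is of order $e^{2\beta T}$ with precisely the displayed coefficient.

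Then I would pass to the limit in $m_1^{(\alpha_1,\beta_1)}(\xi_1^{*},\xi_2^{*})$ term by term, using the Bessel asymptotics $I_\nu(z)\sim e^z/\sqrt{2\pi z}$ of \eqref{eq:I_asymptotic}. A product of two such Bessel functions with argument $-\beta_1T/2$ contributes a factor of order $e^{-\beta_1T}/T$, so, since $\xi_2^{*}$ is exactly of order $e^{2\beta T}$, the term of $D^{(\alpha_1,\beta_1)}(\xi_2^{*})$ of that type has a finite limit: $D^{(\alpha_1,\beta_1)}(\xi_2^{*})\to D_\infty:=1-\tfrac{u_2^2+2u_3}{4\beta^2\sin\pi H}$. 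The only surviving $A_i$ is $A_3$, in which $\xi_2(\xi_2-2\beta_1)e^{-2\beta_1T}$ has a nonzero limit, so $A_3^{(\alpha_1,\beta_1)}(\xi_1^{*},\xi_2^{*})\to A_3^\infty:=\tfrac{4}{\pi}(u_3+\tfrac{u_2^2}{2})(x_0-\tfrac{\alpha}{\beta})^2\lambda_H^{*}\rho_H^2(-\beta)^{2H-2}$, while $A_1,A_2,A_4\to0$ because after the Bessel substitution they are $O(T^{1-2H})$, $O(T^{1/2-H})$ or $O(e^{\beta T}T^{1/2-H})$ and $H>1/2$; the term $-\tfrac{\xi_2^{*}T}{2}\to0$. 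For the Gaussian prefactor, $\alpha_1^2-\alpha^2=\tfrac{\beta^2}{\beta^2-2\theta_4}(\gamma^2u_1^2T^{2H-2}+2\alpha\gamma u_1T^{H-1})+(\tfrac{\beta^2}{\beta^2-2\theta_4}-1)\alpha^2$, and multiplying by $w_T^H/(2\gamma^2)=\lambda_H^{-1}T^{2-2H}/(2\gamma^2)$ one sees that the divergent $T^{1-H}$ term is cancelled exactly by $-\tfrac{u_1\alpha}{\gamma}T^{H-1}w_T^H$, the $O(e^{\beta T})$ terms vanish, and the $T^{2H-2}$ term leaves $\exp\{u_1^2/(2\lambda_H)\}$. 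Altogether $G_T(u_1,u_2,u_3)\to\exp\{u_1^2/(2\lambda_H)\}D_\infty^{-1/2}\exp\{A_3^\infty/(8D_\infty)\}$.

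Finally I would recognise the limit as the MGF of $(\xi,\eta\zeta,\zeta^2)$: by independence $\mathbb{E}\exp\{u_1\xi\}=\exp\{u_1^2/(2\lambda_H)\}$, and conditioning on $\zeta$ gives $\mathbb{E}\exp\{u_2\eta\zeta+u_3\zeta^2\}=\mathbb{E}\exp\{t\zeta^2\}=(1-2t\sigma^2)^{-1/2}\exp\{tm^2/(1-2t\sigma^2)\}$ with $t=\tfrac{u_2^2}{2}+u_3$, where $\sigma^2=\tfrac1{4\beta^2\sin\pi H}$ and $m=\tfrac{(x_0-\alpha/\beta)\rho_H\sqrt{\lambda_H^{*}}(-\beta)^{H-1}}{\sqrt{2\pi}}$ are the parameters of $\zeta$ and the last equality is the Gaussian quadratic-exponential moment formula from the appendix; substituting the definitions of $\lambda_H^{*}$ and $\rho_H$ confirms $D_\infty=1-2t\sigma^2$ and $A_3^\infty/8=tm^2$, so the two MGFs agree. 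Since $G_T$ is finite on a fixed neighbourhood of $0$ for all large $T$ (the domain conditions $\theta_4<\beta^2/2$ and $D^{(\alpha,\beta)}(\xi_2^{*})>0$ of Lemma~\ref{th:S_I_J_K_mgf} hold there), the continuity theorem yields \eqref{eq:I+bK+K_mgf}. I expect the main obstacle to be the bookkeeping in the last two steps: one must keep the expansions of $\alpha_1,\beta_1,\xi_1^{*},\xi_2^{*}$ to exactly the order at which the finite limits of $D$ and $A_3$ and the cancellation of the divergent $T^{1-H}$ term appear, while verifying that every residual cross term $T^a e^{b\beta T}$ with $b>0$ is negligible.
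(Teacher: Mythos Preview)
Your proposal is correct and follows essentially the same route as the paper: reduce the MGF of the three-dimensional vector to $m_1^{(\alpha_1,\beta_1)}$ via Lemma~\ref{th:S_I_J_K_mgf}, Taylor-expand $\alpha_1,\beta_1,\xi_1^*,\xi_2^*$, use the Bessel asymptotics \eqref{eq:I_asymptotic} to show $D\to D_\infty$, $A_3\to A_3^\infty$ and $A_1,A_2,A_4\to0$, verify the cancellation in the exponential prefactor leaving $\exp\{u_1^2/(2\lambda_H)\}$, and finally identify the limit as the MGF of $(\xi,\eta\zeta,\zeta^2)$. The only cosmetic difference is that you recognise the $(\eta\zeta,\zeta^2)$ part by conditioning on $\zeta$ and applying the noncentral $\chi^2$-type formula directly, whereas the paper packages this as Lemma~\ref{l:product_mgf_for_I+bK}; the two computations are equivalent.
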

\begin{proof}
We again obtain the stated asymptotic distribution via MGF of the presented
vector. It could be easily reduced to already studied MGF. That said,
using Lemma~\ref{th:S_I_J_K_mgf}, we have
%
\begin{align}
&\mathbb{E} \biggl[ \exp \biggl\{ \theta _{1} T^{H-1}
\biggl(S_{T}+\beta J_{T}-\frac{\alpha }{\gamma }w^{H}_{T}
\biggr) + \theta _{2} e^{\beta T} (I_{T} + \beta
K_{T}) + \theta _{3} e^{2 \beta
T}
K_{T} \biggr\} \biggr]
\nonumber
\\*
&\quad = m_{2} \bigl(\theta _{1} T^{H-1},
\theta _{2} e^{\beta T}, \theta _{1} \beta
T^{H-1}, \theta _{2} \beta e^{\beta T} + \theta
_{3} e^{2
\beta T} \bigr) \exp \biggl\{ -\frac{\theta _{1} \alpha }{\gamma }T
^{H-1}w^{H}_{T} \biggr\}
\nonumber
\\
&\quad = m_{1}^{(\alpha _{1}(T), \beta _{1}(T))} \biggl(\theta _{1}
T^{H-1}+ \frac{
\alpha - \alpha _{1}(T)}{\gamma }, \theta _{2}
e^{\beta T} - \beta + \beta _{1}(T) \biggr)
\nonumber
\\
&\qquad \times \exp \biggl\{ \frac{\alpha _{1}(T)^{2} - \alpha ^{2}}{2
\gamma ^{2}} w_{T}^{H}
-\frac{\theta _{1} \alpha }{\gamma }T^{H-1}w^{H} _{T}
\biggr\} , \label{eq:mgf-3-stat}
\end{align}
where
\begin{align*}
\alpha _{1}(T) &= \frac{\beta \gamma \theta _{1}T^{H-1}+\alpha \beta }{-\sqrt{
\beta ^{2} - 2   ( \theta _{2} \beta e^{\beta T} + \theta _{3} e^{2
\beta T}   )}},
\\*
\beta _{1}(T) &= -\sqrt{\beta ^{2} - 2 \bigl( \theta
_{2} \beta e ^{\beta T} + \theta _{3}
e^{2 \beta T} \bigr)}.
\end{align*}
Applying the Taylor series expansion, we get as $T\to \infty $
%
\begin{align}
\alpha _{1}(T) &= \bigl(\gamma \theta _{1}T^{H-1}+
\alpha \bigr) \biggl[ 1 - 2 \biggl( \frac{\theta _{2}}{\beta } e^{\beta T} +
\frac{
\theta _{3}}{\beta ^{2}} e^{2 \beta T} \biggr) \biggr]^{-1/2}
\nonumber
\\
&= \bigl(\gamma \theta _{1}T^{H-1}+\alpha \bigr)
\biggl[ 1 + \biggl( \frac{\theta _{2}}{\beta } e^{\beta T} +
\frac{\theta _{3}}{\beta ^{2}} e ^{2 \beta T} \biggr) + O \bigl( e^{2 \beta T}
\bigr) \biggr]
\nonumber
\\
&= \alpha +\gamma \theta _{1}T^{H-1} +
\frac{\theta _{2}\alpha }{
\beta } e^{\beta T} + \frac{\theta _{2}\gamma \theta _{1}}{\beta } T
^{H-1} e^{\beta T} + O \bigl( e^{2 \beta T} \bigr)
\label{eq:alpha1-expand}
\end{align}
{and}
\begin{align}
\beta _{1}(T) &= \beta \biggl[ 1 - 2 \biggl( \theta
_{2} \frac{1}{\beta } e^{\beta T} + \theta
_{3} \frac{1}{\beta
^{2}} e^{2 \beta T} \biggr)
\biggr]^{1/2}
\nonumber
\\
&= \beta \biggl[ 1 - \biggl( \theta _{2} \frac{1}{\beta }
e^{\beta T} + \theta _{3} \frac{1}{\beta ^{2}}
e^{2 \beta T} \biggr)
\nonumber
\\*
& \qquad - \frac{1}{2} \biggl( \theta _{2}
\frac{1}{\beta } e^{\beta T} + \theta _{3}
\frac{1}{\beta ^{2}} e^{2 \beta T} \biggr)^{2} + O \bigl(
e^{3
\beta T} \bigr) \biggr]
\nonumber
\\
&= \beta - \theta _{2} e^{\beta T} + \frac{\theta _{2}^{2} + 2 \theta
_{3}}{2 (-\beta )}
e^{2 \beta T} + O \bigl( e^{3 \beta T} \bigr). \label{eq:beta1-expand}
\end{align}
Note that $\alpha _{1}(T)\to \alpha $ and $\beta _{1}(T)\to \beta $, as
$T\to \infty $. Moreover, the arguments of the function $m_{1}^{(
\alpha _{1}(T),\beta _{1}(T))}$ in \eqref{eq:mgf-3-stat} have the
following asymptotic behavior:
%
\begin{align}
\xi _{1}(T) &\coloneqq \theta _{1}T^{H-1}+
\frac{\alpha - \alpha _{1}(T)}{
\gamma } =\frac{\theta _{2}\alpha }{-\beta \gamma } e^{\beta T} +
\frac{
\theta _{1}\theta _{2}}{-\beta } T^{H-1} e^{\beta T}+ O \bigl(
e^{2
\beta T} \bigr), \label{eq:xi1-as}
\\
\xi _{2}(T) &\coloneqq \theta _{2} e^{\beta T} -
\beta + \beta _{1}(T) = \frac{\theta _{2}^{2} + 2 \theta _{3}}{2 (-\beta )} e^{2 \beta T} + O
\bigl( e^{3 \beta T} \bigr), \label{eq:xi2-as}
\end{align}
as $T\to \infty $. Further, inserting \eqref{eq:xi2-as} into
\eqref{eq:D_def}, and applying the expansion \eqref{eq:I_asymptotic}
from Appendix~\ref{sec:Bessel-function}, we obtain
%
\begin{align}
&D^{(\alpha _{1}(T), \beta _{1}(T))}\bigl(\xi _{2}(T)\bigr)
\nonumber\\
&\quad = \biggl(1 -
\frac{(\theta _{2}^{2} + 2 \theta _{3}) e^{2 \beta T}}{4 \beta _{1}(T)
(-\beta )} \biggr)^{2} + \frac{(\theta _{2}^{2} + 2 \theta _{3})^{2} e
^{4 \beta T}}{16 \beta _{1}(T)^{2} \beta ^{2}}
e^{-2 \beta _{1}(T) T}
\nonumber
\\
&\quad \quad + \biggl( \frac{(\theta _{2}^{2} + 2\theta _{3}) e^{2\beta T}}{2
\beta _{1}(T) (-\beta )} - \frac{(\theta _{2}^{2} + 2\theta _{3})^{2} e
^{4\beta T}}{8 \beta _{1}(T)^{2} \beta ^{2}} \biggr)
\frac{(-\beta _{1}(T)) \pi T}{4 \sin \pi H} e^{-\beta _{1}(T) T}
\nonumber
\\
& \quad \quad \times \biggl[ I_{-H} \biggl( -\frac{\beta _{1}(T) T}{2}
\biggr) I _{H - 1} \biggl( -\frac{\beta _{1}(T) T}{2} \biggr)
\nonumber
\\
&\quad  \quad  + I_{1 - H} \biggl( -\frac{\beta _{1}(T) T}{2} \biggr)
I_{H} \biggl( -\frac{
\beta _{1}(T) T}{2} \biggr) \biggr] + O \bigl(
e^{\beta T} \bigr)
\nonumber
\\
&\quad \sim \biggl(1 - \frac{(\theta _{2}^{2} + 2 \theta _{3}) e^{2 \beta T}}{4
\beta _{1}(T) (-\beta )} \biggr)^{2} +
\frac{(\theta _{2}^{2} + 2 \theta
_{3})^{2} e^{4 \beta T}}{16 \beta _{1}(T)^{2} \beta ^{2}} e^{-2 \beta
_{1}(T) T}
\nonumber
\\
&\quad \quad + \biggl( \frac{(\theta _{2}^{2} + 2\theta _{3}) e^{2\beta T}}{2
\beta _{1}(T) (-\beta )} - \frac{(\theta _{2}^{2} + 2\theta _{3})^{2} e
^{4\beta T}}{8 \beta _{1}(T)^{2} \beta ^{2}} \biggr)
\frac{1}{2 \sin
\pi H} e^{-2 \beta _{1}(T) T} + O \bigl( e^{\beta T} \bigr)
\nonumber
\\
&\quad \to 1 - \frac{\theta _{2}^{2} + 2 \theta _{3}}{4 \beta ^{2} \sin
\pi H}, \quad \text{as } T\to \infty . \label{eq:D'}
\end{align}
It follows from \eqref{eq:xi1-as}, \eqref{eq:xi2-as} that
%
\begin{equation}
\label{eq:c1-c2} c_{1} \bigl(\tfrac{\alpha _{1}(T)}{\beta _{1}(T)} \bigr) \xi
_{1}(T) - c _{2} \bigl(\tfrac{\alpha _{1}(T)}{\beta _{1}(T)} \bigr) \xi
_{2}(T) \sim c_{1} \bigl(\tfrac{\alpha }{\beta } \bigr)
\frac{\theta _{2}\alpha }{-
\beta \gamma } e^{\beta T}, \quad \text{as }T\to \infty .
\end{equation}
Using this relation, \eqref{eq:xi2-as} and \eqref{eq:I_asymptotic}, we
get from \eqref{eq:A_1_def} that
%
\begin{align}
&A_{1}^{(\alpha _{1}(T), \beta _{1}(T))}\bigl(\xi _{1}(T), \xi
_{2}(T)\bigr)\nonumber\\
&\quad  = \xi _{2}(T) \bigl(c_{1}
\bigl(\tfrac{\alpha _{1}(T)}{
\beta _{1}(T)} \bigr) \xi _{1}(T) - c_{2}
\bigl(\tfrac{\alpha _{1}(T)}{
\beta _{1}(T)} \bigr)\xi _{2}(T) \bigr)
\nonumber
\\*
& \qquad \times \bigl(-\beta _{1}(T)\bigr)^{H-1}
T^{1 - H} e^{-\frac{3 \beta _{1}(T) T}{2}} I_{1 - H} \biggl( -
\frac{\beta _{1}(T) T}{2} \biggr)
\nonumber
\\
&\quad \sim \frac{\theta _{2}^{2} + 2 \theta _{3}}{2 (-\beta )} e^{2 \beta T} c_{1} \bigl(
\tfrac{\alpha }{\beta } \bigr) \frac{\alpha }{(-\beta )
\gamma } \theta _{2}
e^{\beta T} \bigl(-\beta _{1}(T)\bigr)^{H-1}
T^{1 - H}
\nonumber
\\*
& \qquad \times e^{-\frac{3 \beta _{1}(T) T}{2}} I_{1 - H} \biggl( -
\frac{\beta
_{1}(T) T}{2} \biggr)
\nonumber
\\
&\quad \sim \frac{\alpha \theta _{2}  (\theta _{2}^{2} + 2
\theta _{3}  )}{2 \beta ^{2}\gamma } e^{3 \beta T} c_{1} \bigl(
\tfrac{
\alpha }{\beta } \bigr) \bigl(-\beta _{1}(T)
\bigr)^{H-3/2} \frac{1}{\sqrt{
\pi }} T^{1/2 - H}
e^{-2 \beta _{1}(T) T}
\nonumber
\\
&\quad = O \bigl( T^{1/2 - H} e^{\beta T} \bigr) \to 0, \quad
\text{as }T \to \infty . \label{eq:A1'}
\end{align}
It follows from \eqref{eq:xi1-as}, \eqref{eq:xi2-as} that
\begin{gather*}
\xi _{1}(T)^{2} = \frac{\theta _{2}^{2}\alpha ^{2}}{\beta ^{2}\gamma ^{2}}
e^{2\beta T} + O \bigl( T^{H-1}e^{2 \beta T} \bigr),
\\
\xi _{1}(T)\xi _{2}(T) = O \bigl( e^{3 \beta T}
\bigr), \qquad \xi _{2}(T)^{2} = O \bigl(e^{4 \beta T}
\bigr),
\end{gather*}
as $T\to \infty $. Therefore, by \eqref{eq:A_2_def} and
\eqref{eq:I_asymptotic} we obtain
%
\begin{align}
&A_{2}^{(\alpha _{1}(T), \beta _{1}(T))}\bigl(\xi _{1}(T), \xi
_{2}(T)\bigr)
\nonumber
\\*
&\quad = \bigl( \xi _{1}(T)^{2} c_{3} -
\xi _{1}(T) \xi _{2}(T) c_{4} \bigl(
\tfrac{
\alpha _{1}(T)}{\beta _{1}(T)} \bigr) + \xi _{2}(T)^{2}
c_{5} \bigl(\tfrac{
\alpha _{1}(T)}{\beta _{1}(T)} \bigr) \bigr)
\nonumber
\\*
& \qquad \times T^{2 - 2H} e^{-\beta _{1}(T) T} I_{1 - H} \biggl(
-\frac{\beta
_{1}(T) T}{2} \biggr) I_{H - 1} \biggl( -\frac{\beta _{1}(T) T}{2}
\biggr)
\nonumber
\\
&\quad \sim c_{3} \frac{\alpha ^{2}}{\beta ^{2} \gamma ^{2}} \theta _{2}^{2}
e ^{2 \beta T} T^{2 - 2H} e^{- \beta _{1}(T) T} I_{1 - H}
\biggl( -\frac{
\beta _{1}(T) T}{2} \biggr) I_{H - 1} \biggl( -
\frac{\beta _{1}(T) T}{2} \biggr)
\nonumber
\\
&\quad \sim e^{2 \beta T} c_{3} \frac{\alpha ^{2}}{\beta ^{2} \gamma ^{2}} \theta
_{2}^{2} \frac{1}{(-\beta ) \pi } T^{1 - 2H}
e^{- 2 \beta _{1}(T)
T}
\nonumber
\\*
&\quad =O \bigl(T^{1 - 2H} \bigr)\to 0, \quad \text{as }T\to \infty .
\label{eq:A2'}
\end{align}

Note that $\xi _{2}(T)-2\beta _{1}(T)\to -2\beta $, as $T\to \infty $, by
\eqref{eq:beta1-expand} and \eqref{eq:xi2-as}. Hence, by
\eqref{eq:A_3_def} and \eqref{eq:I_asymptotic},\vadjust{\goodbreak}
%
\begin{align}
& A_{3}^{(\alpha _{1}(T), \beta _{1}(T))}\bigl(\xi _{1}(T), \xi
_{2}(T)\bigr)
\nonumber\\
&\quad = \xi _{2}(T) \bigl( \xi
_{2}(T) - 2 \beta _{1}(T) \bigr) c_{6}
\bigl(\tfrac{\alpha _{1}(T)}{\beta _{1}(T)} \bigr)
\nonumber
\\*
& \qquad \times \bigl(-\beta _{1}(T)\bigr)^{2H-1} T
e^{-\beta _{1}(T) T} I_{1 - H} \biggl( -\frac{\beta _{1}(T) T}{2} \biggr)
I_{-H} \biggl( -\frac{\beta _{1}(T)
T}{2} \biggr)
\nonumber
\\
&\quad \sim \frac{\theta _{2}^{2} + 2 \theta _{3}}{-2\beta } e^{2 \beta T}(-2 \beta ) c_{6}
\bigl(\tfrac{\alpha }{\beta } \bigr) \bigl(-\beta _{1}(T)
\bigr)^{2H-2} \frac{1}{\pi } e^{- 2 \beta _{1}(T) T}
\nonumber
\\
&\quad \to \frac{c_{6}  (\frac{\alpha }{\beta }  ) (-\beta )^{2H-2}}{
\pi } \bigl(\theta _{2}^{2} +2
\theta _{3}\bigr), \quad \text{as }T\to \infty . \label{eq:A3'}
\end{align}
Similarly, using \eqref{eq:A_4_def}, \eqref{eq:c1-c2} and
\eqref{eq:I_asymptotic}, we get
%
\begin{align}
& A_{4}^{(\alpha _{1}(T), \beta _{1}(T))}\bigl(\xi _{1}(T), \xi
_{2}(T)\bigr)
\nonumber\\
&\quad = \bigl(c_{1} \bigl(\tfrac{\alpha _{1}(T)}{\beta _{1}(T)}
\bigr) \xi _{1}(T) - c_{2} \bigl(\tfrac{\alpha _{1}(T)}{\beta _{1}(T)}
\bigr) \xi _{2}(T) \bigr)
\nonumber
\\*
& \qquad \times \bigl(\xi _{2}(T) - 2 \beta _{1}(T)
\bigr) \bigl(-\beta _{1}(T)\bigr)^{H - 1}
T^{1 - H} e^{-\frac{\beta _{1}(T) T}{2}} I_{1 - H} \biggl( -
\frac{\beta _{1}(T) T}{2} \biggr)
\nonumber
\\
&\quad \sim c_{1} \bigl(\tfrac{\alpha }{\beta } \bigr)
\frac{\theta _{2}\alpha
}{-\beta \gamma } e^{\beta T} (- 2 \beta ) \bigl(-\beta
_{1}(T)\bigr)^{H - 3/2} \frac{1}{\sqrt{
\pi }}
T^{1/2 - H} e^{- \beta _{1}(T) T}
\nonumber
\\
&\quad =O \bigl( T^{1/2-H} \bigr) \to 0, \quad \text{as }T\to \infty .
\label{eq:A4'}
\end{align}
Also, \eqref{eq:alpha1-expand} implies
\begin{equation*}
\alpha _{1}(T)^{2} = \alpha ^{2} +2\alpha
\gamma \theta _{1}T^{H-1} + \gamma ^{2}\theta
_{1}^{2}T^{2H-2} + O \bigl(e^{\beta T}
\bigr), \quad \text{as }T\to \infty .
\end{equation*}
Then, for the expression under the exponential function in
\eqref{eq:mgf-3-stat} we have
%
\begin{align}
& \frac{\alpha _{1}(T)^{2} - \alpha ^{2}}{2 \gamma ^{2}} w_{T} ^{H} -
\frac{\theta _{1} \alpha }{\gamma }T^{H-1}w^{H}_{T} =
\frac{1}{2} \theta _{1}^{2}T^{2H-2}w_{T}^{H}
+ O \bigl(w_{T}^{H}e^{\beta T}\bigr)
\nonumber
\\
&\quad =\frac{1}{2}\theta _{1}^{2}\lambda
_{H}^{-1} + O \bigl(w_{T}^{H}e^{\beta T}
\bigr) \to \frac{1}{2}\theta _{1}^{2}\lambda
_{H}^{-1}, \quad \text{as }T \to \infty ,
\label{eq:exponent}
\end{align}
since $w^{H}_{T}=\lambda _{H}^{-1}T^{2-2H}$.

Thus, inserting \eqref{eq:D'} and \eqref{eq:A1'}--\eqref{eq:exponent}
into \eqref{eq:mgf-3-stat}, we arrive at
\begin{align*}
& \mathbb{E} \biggl[ \exp \biggl\{ \theta _{1} T^{H-1}
\biggl(S_{T}+\beta J_{T}-\frac{\alpha }{\gamma }w^{H}_{T}
\biggr) + \theta _{2} e^{\beta T} (I_{T} + \beta
K_{T}) + \theta _{3} e^{2 \beta
T}
K_{T} \biggr\} \biggr]
\\*
&\quad \to \exp \biggl\{ \frac{1}{2}\theta _{1}^{2}
\lambda _{H}^{-1} \biggr\} \biggl( 1 -
\frac{\theta _{2}^{2} + 2 \theta _{3}}{4 \beta ^{2} \sin
\pi H} \biggr)^{-1/2}
\\
&\qquad \times \exp \biggl\{ \frac{c_{6}(\frac{\alpha }{\beta }) (-
\beta )^{2H-2}   ( \theta _{2}^{2} + 2 \theta _{3}   )}{8
\pi   \Bigl( 1 - \frac{\theta _{2}^{2} + 2 \theta _{3}}{4 \beta ^{2}
\sin \pi H}   \Bigr)} \biggr\} ,\quad \text{as } T
\to \infty .
\end{align*}
We see that the limit is a product of MGF of the normal random variable
$\xi \stackrel{d}{=}\mathcal{N}  (0,\lambda _{H}^{-1}  )$ and
MGF of the random vector $\binom{\eta \zeta }{\zeta ^{2}}$, where the random variables
$\eta \stackrel{d}{=}\mathcal{N}(0, 1)$ and $ \zeta \stackrel{d}{=}
\mathcal{N}  \biggl( \frac{\sqrt{c_{6}(\frac{\alpha }{\beta })}(-
\beta )^{H - 1}}{2\sqrt{\pi }},\allowbreak \frac{1}{4 \beta ^{2} \sin \pi H}
  \biggr)$ are independent, see Lemma~\ref{l:product_mgf_for_I+bK} in
Appendix~\ref{sec:product_mgf_general}. This concludes the proof, since
$c_{6}(\frac{\alpha }{\beta }) = ( x_{0} - \frac{\alpha }{\beta })^{2}
2 \lambda _{H}^{*} \rho _{H}^{2}$.
\end{proof}

\begin{remark}
\label{rem:normality}
In fact, $\mathcal{N}  (0, \lambda _{H}^{-1}  )$ is an exact
distribution of the random variable\break $T^{H-1}  (S_{T}+\beta J_{T}-\frac{
\alpha }{\gamma }w^{H}_{T}  )$ for any $T$. It can be easily seen
from the above proof by putting $\theta _{2}=\theta _{3}=0$ (then
$\alpha _{1}(T)=\alpha + \gamma \theta _{1}T^{H-1}$, $\beta _{1}(T)=
\beta $, $\xi _{1}(T)=\xi _{2}(T)=0$).
\end{remark}

The following series of corollaries will describe asymptotic
distributions of minor components of the MLE.

First, by considering the convergence of the first component of the
random vector in \eqref{eq:I+bK+K_mgf}, we immediately get the following
result.
%
\begin{cor}
\label{l:S+bJ_asymptotic}
For the process $(S_{T} + \beta J_{T})$ we have
\begin{equation*}
\frac{1}{w_{T}^{H}} (S_{T} + \beta J_{T})
\xrightarrow{\mathsf{P}} \frac{
\alpha }{\gamma }, \quad T \to \infty .
\end{equation*}
\end{cor}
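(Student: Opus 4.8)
The plan is to read the statement off the first coordinate of the limit in Lemma~\ref{l:I+bK+K_mgf}. First I would recall from that lemma that
\begin{equation*}
T^{H-1}\biggl(S_T+\beta J_T-\frac{\alpha}{\gamma}w^H_T\biggr)\xrightarrow{d}\xi\stackrel{d}{=}\mathcal{N}\bigl(0,\lambda_H^{-1}\bigr),\qquad T\to\infty,
\end{equation*}
(in fact this random variable has the law $\mathcal{N}(0,\lambda_H^{-1})$ for every $T$ by Remark~\ref{rem:normality}); in particular the family $\bigl\{T^{H-1}(S_T+\beta J_T-\tfrac{\alpha}{\gamma}w^H_T)\bigr\}_{T}$ is bounded in probability, i.e.\ $O_{\mathsf{P}}(1)$ as $T\to\infty$.

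Next I would simply rescale. Since $w^H_T=\lambda_H^{-1}T^{2-2H}$, centering gives
\begin{equation*}
\frac{1}{w_T^H}\bigl(S_T+\beta J_T\bigr)-\frac{\alpha}{\gamma}
=\frac{1}{w_T^H}\biggl(S_T+\beta J_T-\frac{\alpha}{\gamma}w^H_T\biggr)
=\lambda_H\,T^{H-1}\cdot T^{H-1}\biggl(S_T+\beta J_T-\frac{\alpha}{\gamma}w^H_T\biggr).
\end{equation*}
Because $H\in(1/2,1)$, the deterministic prefactor $\lambda_H T^{H-1}$ tends to $0$, while the remaining factor is $O_{\mathsf{P}}(1)$ by the previous step; hence the product converges to $0$ in probability (Slutsky's lemma), which is exactly $\tfrac{1}{w_T^H}(S_T+\beta J_T)\xrightarrow{\mathsf{P}}\tfrac{\alpha}{\gamma}$.

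I do not anticipate any real obstacle: the result is an immediate corollary, and the only point worth stating explicitly is that convergence in distribution to a proper limit law (here even equality in law for all $T$) entails tightness, and this is what lets the vanishing deterministic factor force convergence to zero in probability. Accordingly I would keep the proof to essentially these two displays.
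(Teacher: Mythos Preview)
Your argument is correct and is exactly the approach the paper takes: the corollary is obtained by reading off the first component of Lemma~\ref{l:I+bK+K_mgf} and noting that dividing by $w_T^H$ introduces the vanishing factor $\lambda_H T^{H-1}$. You have simply written out the two-line computation that the paper leaves implicit.
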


Next, we focus on the process $I_{T}$. In order to obtain its
asymptotic behavior it suffices to write
\begin{equation*}
e^{2 \beta T} I_{T} = e^{2 \beta T} (I_{T}+
\beta K_{T}) - \beta e^{2
\beta T} K_{T}
\end{equation*}
and then apply \eqref{eq:I+bK+K_mgf}.
%
\begin{cor}
\label{l:I_T_asymptotic}
For the process $I_{T}$ we have
\begin{equation*}
e^{2 \beta T} I_{T} \xrightarrow{d}-\beta \zeta
^{2}, \quad T \to \infty ,
\end{equation*}
where $\zeta $ has the normal distribution defined in
\eqref{eq:I+bK_asymptotic_zeta}.
\end{cor}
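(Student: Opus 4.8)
The plan is to derive the asymptotic distribution of $e^{2\beta T} I_T$ by decomposing it into components whose joint limit is already known. The key observation is the algebraic identity
\[
e^{2\beta T} I_T = e^{2\beta T}(I_T + \beta K_T) - \beta\, e^{2\beta T} K_T,
\]
which expresses $e^{2\beta T} I_T$ as a linear combination of the second and third components appearing in the vector on the left-hand side of \eqref{eq:I+bK+K_mgf}. Indeed, $e^{2\beta T}(I_T + \beta K_T) = e^{-\beta T}\cdot e^{\beta T}(I_T + \beta K_T)$, and since $\beta < 0$ we have $e^{-\beta T}\to\infty$; however, multiplying by the vanishing factor is not quite what we want. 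More carefully, I would note that $e^{\beta T}(I_T+\beta K_T)$ and $e^{2\beta T}K_T$ converge jointly to $(\eta\zeta,\zeta^2)$ by Lemma~\ref{l:I+bK+K_mgf}, and rewrite
\[
e^{2\beta T} I_T = e^{\beta T}\cdot\bigl(e^{\beta T}(I_T+\beta K_T)\bigr) - \beta\cdot\bigl(e^{2\beta T}K_T\bigr).
\]

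First I would apply the continuous mapping theorem together with the joint convergence in \eqref{eq:I+bK+K_mgf}. Since the first term carries a factor $e^{\beta T}\to 0$ and $e^{\beta T}(I_T+\beta K_T)\xrightarrow{d}\eta\zeta$, the product $e^{\beta T}\cdot e^{\beta T}(I_T+\beta K_T)$ converges in distribution (hence in probability, the limit being a constant) to $0$ by Slutsky's lemma. The second term $-\beta\, e^{2\beta T} K_T$ converges in distribution to $-\beta\zeta^2$ by the third-coordinate projection of \eqref{eq:I+bK+K_mgf}. Combining these via Slutsky's lemma yields $e^{2\beta T} I_T \xrightarrow{d} -\beta\zeta^2$, which is exactly the claimed statement, with $\zeta$ distributed as in \eqref{eq:I+bK_asymptotic_zeta}.

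The main (and only real) obstacle is a bookkeeping point: one must ensure that the joint convergence in \eqref{eq:I+bK+K_mgf}, rather than merely the marginal convergences, is invoked, so that Slutsky's lemma applies to the sum of the two terms. Since $e^{\beta T}\cdot e^{\beta T}(I_T+\beta K_T)\xrightarrow{\mathsf P}0$ (a degenerate limit) and $-\beta e^{2\beta T}K_T\xrightarrow{d}-\beta\zeta^2$, Slutsky guarantees the sum converges in distribution to $0 + (-\beta\zeta^2) = -\beta\zeta^2$. No further computation is required; the proof is a short consequence of Lemma~\ref{l:I+bK+K_mgf}.
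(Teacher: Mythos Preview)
Your proposal is correct and follows exactly the same approach as the paper: write $e^{2\beta T}I_T = e^{2\beta T}(I_T+\beta K_T) - \beta\,e^{2\beta T}K_T$ and deduce the limit from Lemma~\ref{l:I+bK+K_mgf} via Slutsky's lemma. The paper's argument is stated in a single line; your version simply spells out the Slutsky step in more detail.
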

Finally, the asymptotic behavior of $J_{T}$ can be easily derived using
Lemma~\ref{l:S_asymptotic}, Corollary \ref{l:S+bJ_asymptotic} and the
identity
\begin{equation*}
-\beta T^{H - \frac{1}{2}} e^{\beta T} J_{T} =
T^{H - \frac{1}{2}} e ^{\beta T} S_{T} - T^{H - \frac{1}{2}}
e^{\beta T} (S_{T} + \beta J_{T} ).
\end{equation*}
%
\begin{cor}
\label{l:J_asymptotic}
For the process $J_{T}$ we have
\begin{equation*}
T^{H - \frac{1}{2}} e^{\beta T} J_{T} \xrightarrow{d}
\mathcal{N} \biggl( \frac{8 ( x_{0} - \frac{\alpha }{\beta })\rho _{H} (-\beta )^{H
- 3/2}}{\sqrt{\pi }}, \frac{4 \varGamma (H) \varGamma (1 - H)}{\lambda
_{H}^{*}(-\beta )^{3} \pi } \biggr),\quad  T
\to \infty .
\end{equation*}
\end{cor}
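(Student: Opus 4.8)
The plan is to exploit the elementary identity recorded immediately before the statement,
\[
-\beta\, T^{H-1/2} e^{\beta T} J_{T}
= T^{H-1/2} e^{\beta T} S_{T}
- T^{H-1/2} e^{\beta T} (S_{T} + \beta J_{T}),
\]
and to show that the second summand on the right is asymptotically negligible in probability, so that the limit law follows from Lemma~\ref{l:S_asymptotic} together with Slutsky's theorem, after dividing through by the positive constant $-\beta$.

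First I would dispose of the correction term $T^{H-1/2} e^{\beta T}(S_{T}+\beta J_{T})$. By Corollary~\ref{l:S+bJ_asymptotic} we have $(w_{T}^{H})^{-1}(S_{T}+\beta J_{T}) \xrightarrow{\mathsf{P}} \alpha/\gamma$, hence $S_{T}+\beta J_{T} = O_{\mathsf{P}}(w_{T}^{H})$. Since $w_{T}^{H} = \lambda_{H}^{-1} T^{2-2H}$, the deterministic prefactor satisfies $T^{H-1/2} e^{\beta T} w_{T}^{H} = \lambda_{H}^{-1} T^{3/2-H} e^{\beta T} \to 0$ as $T\to\infty$, because $\beta<0$ makes the exponential decay dominate the polynomial growth $T^{3/2-H}$ (note $3/2-H\in(1/2,1)$). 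Hence $T^{H-1/2} e^{\beta T}(S_{T}+\beta J_{T}) \xrightarrow{\mathsf{P}} 0$. Alternatively, one may argue directly: by \eqref{eq:SviaM} and \eqref{eq:Q_via_P} one gets $S_{T}+\beta J_{T} = (\alpha/\gamma)\, w_{T}^{H} + M_{T}^{H}$ with $M_{T}^{H}$ a centered Gaussian variable of variance $w_{T}^{H}$, so each of the two pieces, once multiplied by $T^{H-1/2} e^{\beta T}$, converges to $0$ in probability.

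Next I would invoke Lemma~\ref{l:S_asymptotic} for the leading term $T^{H-1/2} e^{\beta T} S_{T}$, and combine it with the previous step via Slutsky's theorem to obtain the asymptotic normal law of $-\beta\, T^{H-1/2} e^{\beta T} J_{T}$. Dividing by the positive constant $-\beta$ then multiplies the limiting mean by $(-\beta)^{-1}$ and the limiting variance by $\beta^{-2}$; using $(-\beta)^{H-1/2}/(-\beta) = (-\beta)^{H-3/2}$ and collecting the remaining constants $\rho_{H}$, $\lambda_{H}^{*}$, $\varGamma(H)\varGamma(1-H)$ yields the normal limit displayed in the corollary.

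I do not anticipate a genuine obstacle here: the argument reduces entirely to the two previously established results plus Slutsky's theorem. The only points requiring a little care are verifying that the polynomial prefactor $T^{3/2-H}$ cannot offset the exponential factor $e^{\beta T}$, and keeping careful track of the multiplicative constants through the final rescaling by $-\beta$.
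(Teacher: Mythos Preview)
Your proposal is correct and follows exactly the approach outlined in the paper: use the displayed identity together with Lemma~\ref{l:S_asymptotic} and Corollary~\ref{l:S+bJ_asymptotic}, then apply Slutsky's theorem. Your supplementary remark that $S_{T}+\beta J_{T} = (\alpha/\gamma)\, w_{T}^{H} + M_{T}^{H}$ (from \eqref{eq:SviaM} and \eqref{eq:Q_via_P}) is a nice alternative way to see the negligibility of the correction term, but the overall strategy is identical to the paper's.
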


\section{Main results}%
\label{sec:main}
Now we are ready to prove the main result of the article.
%
\begin{thm}
\label{th:alpha_asymptotic}
Let $\beta < 0$, $H \in (1/2,1)$. Then
%
\begin{equation}
\label{eq:alpha_asymptotic} %
\begin{pmatrix}
T^{1-H}   ( \widehat{\alpha }_{T} - \alpha   )
\\
e^{-\beta T}   ( \widehat{\beta }_{T} - \beta   )
\end{pmatrix} %
\xrightarrow{d}
\begin{pmatrix}
\nu
\\
\frac{\eta }{\zeta }
\end{pmatrix} %
, \quad T \to \infty ,
\end{equation}
where $\nu \stackrel{d}{=}\mathcal{N}(0, \lambda _{H} \gamma ^{2})$,
$\eta \stackrel{d}{=}\mathcal{N}(0, 1)$, and
%
\begin{equation}
\label{eq:alpha_asymptotic_zeta} \zeta \stackrel{d} {=}\mathcal{N} \biggl( \frac{( x_{0} - \frac{\alpha
}{\beta })\rho _{H}\sqrt{\lambda _{H}^{*} } (-\beta )^{H - 1}}{\sqrt{2
\pi }},
\frac{1}{4 \beta ^{2} \sin \pi H} \biggr)
\end{equation}
are independent random variables.\index{random variables} In particular, the estimators
$\widehat{\alpha }_{T}$ and $\widehat{\beta }_{T}$ are asymptotically
independent.
\end{thm}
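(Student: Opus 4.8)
The plan is to reduce the statement to Lemma~\ref{l:I+bK+K_mgf}. The idea is to re-express the errors $\widehat\alpha_T-\alpha$ and $\widehat\beta_T-\beta$ through the three statistics whose joint limiting behaviour (after the normalizations $T^{H-1}$, $e^{\beta T}$, $e^{2\beta T}$) is described there, namely $u_T:=S_T+\beta J_T-\frac{\alpha}{\gamma}w_T^H$, $v_T:=I_T+\beta K_T$ and $K_T$, together with the auxiliary statistic $J_T$, whose size $J_T=O_{\mathsf P}(T^{1/2-H}e^{-\beta T})$ follows from Corollary~\ref{l:J_asymptotic}. Accordingly, put $W_T:=T^{H-1/2}e^{\beta T}J_T$, $\Theta_T:=T^{H-1}u_T$, $\Phi_T:=e^{\beta T}v_T$ and $\Psi_T:=e^{2\beta T}K_T$; then $W_T=O_{\mathsf P}(1)$, and Lemma~\ref{l:I+bK+K_mgf} gives the joint convergence $(\Theta_T,\Phi_T,\Psi_T)\xrightarrow{d}(\xi,\eta\zeta,\zeta^2)$, with $\xi,\eta,\zeta$ independent and distributed as stated there.

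First I would do the algebra. Writing $\Delta_T:=w_T^H K_T-J_T^2$ for the common denominator in \eqref{eq:mle} and substituting $S_T=u_T-\beta J_T+\frac{\alpha}{\gamma}w_T^H$, $I_T=v_T-\beta K_T$, one checks after simplification that
\begin{equation*}
\widehat\alpha_T-\alpha=\frac{\gamma K_T u_T-\gamma J_T v_T+\alpha J_T^2}{\Delta_T},\qquad
\widehat\beta_T-\beta=\frac{J_T u_T+\frac{\alpha}{\gamma}w_T^H J_T-w_T^H v_T}{\Delta_T}.
\end{equation*}
Next, using $w_T^H=\lambda_H^{-1}T^{2-2H}$ and dividing numerator and denominator of each fraction by $\lambda_H^{-1}T^{2-2H}e^{-2\beta T}$, one rewrites them as
\begin{align*}
T^{1-H}(\widehat\alpha_T-\alpha)&=\frac{\gamma\lambda_H\Psi_T\Theta_T-\gamma\lambda_H T^{-1/2}W_T\Phi_T+\alpha\lambda_H T^{-H}W_T^2}{\Psi_T-\lambda_H T^{-1}W_T^2},\\
e^{-\beta T}(\widehat\beta_T-\beta)&=\frac{\lambda_H T^{-1/2}W_T\Theta_T+\tfrac{\alpha}{\gamma}T^{1/2-H}W_T-\Phi_T}{\Psi_T-\lambda_H T^{-1}W_T^2}.
\end{align*}
Here $H\in(1/2,1)$ forces all of the exponents $-1/2$, $-H$, $-1$, $1/2-H$ to be negative, so since $W_T=O_{\mathsf P}(1)$ and $\Theta_T,\Phi_T=O_{\mathsf P}(1)$, every term above carrying a power of $T$ is $o_{\mathsf P}(1)$. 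Consequently
\begin{equation*}
T^{1-H}(\widehat\alpha_T-\alpha)=\frac{\gamma\lambda_H\Psi_T\Theta_T+o_{\mathsf P}(1)}{\Psi_T+o_{\mathsf P}(1)},\qquad
e^{-\beta T}(\widehat\beta_T-\beta)=\frac{-\Phi_T+o_{\mathsf P}(1)}{\Psi_T+o_{\mathsf P}(1)}.
\end{equation*}

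Since $\zeta$ is a non-degenerate Gaussian variable, $\Psi_T$ has an a.s.\ positive limit $\zeta^2$, so the joint convergence $(\Theta_T,\Phi_T,\Psi_T)\xrightarrow{d}(\xi,\eta\zeta,\zeta^2)$ together with Slutsky's lemma and the continuous mapping theorem give that the two displayed ratios converge jointly to $\gamma\lambda_H\zeta^2\xi/\zeta^2=\gamma\lambda_H\xi$ and $-\eta\zeta/\zeta^2=-\eta/\zeta$, i.e.
\begin{equation*}
\begin{pmatrix} T^{1-H}(\widehat\alpha_T-\alpha)\\ e^{-\beta T}(\widehat\beta_T-\beta)\end{pmatrix}
\xrightarrow{d}
\begin{pmatrix} \gamma\lambda_H\xi\\ -\eta/\zeta\end{pmatrix},\qquad T\to\infty.
\end{equation*}
Finally, $\xi\stackrel{d}{=}\mathcal N(0,\lambda_H^{-1})$ implies that $\nu:=\gamma\lambda_H\xi$ is $\mathcal N(0,\gamma^2\lambda_H)$; moreover $-\eta\stackrel{d}{=}\mathcal N(0,1)$, and $\xi$, $-\eta$, $\zeta$ are still independent, so writing $\eta$ for $-\eta$ yields exactly \eqref{eq:alpha_asymptotic}–\eqref{eq:alpha_asymptotic_zeta} with the stated independence; in particular $\widehat\alpha_T$ and $\widehat\beta_T$ are asymptotically independent.

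I expect the main obstacle to be Steps~1--2 combined: obtaining the correct algebraic reduction so the numerators involve only $u_T,v_T,K_T,J_T$, and then the order-of-magnitude bookkeeping showing that every $J_T$-contribution vanishes after normalization --- which is precisely where $H>1/2$ is used --- and that $\Theta_T,\Phi_T,\Psi_T$ converge \emph{jointly}, so that the factor $\Psi_T\to\zeta^2$ genuinely cancels between the numerator and the denominator of the $\widehat\alpha_T$-component. Once this is in place the conclusion is a routine application of Slutsky's lemma and the continuous mapping theorem.
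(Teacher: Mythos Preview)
Your proof is correct and follows essentially the same approach as the paper: both reduce the normalized errors to expressions in the three statistics $T^{H-1}(S_T+\beta J_T-\tfrac{\alpha}{\gamma}w_T^H)$, $e^{\beta T}(I_T+\beta K_T)$, $e^{2\beta T}K_T$ from Lemma~\ref{l:I+bK+K_mgf}, use Corollary~\ref{l:J_asymptotic} to bound $J_T$, show the residual $J_T$-contributions are $o_{\mathsf P}(1)$, and conclude via Slutsky and continuous mapping. Your version is just organized more systematically through the named quantities $\Theta_T,\Phi_T,\Psi_T,W_T$, whereas the paper performs the same manipulations inline; the underlying argument and the lemmas invoked are identical.
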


\begin{proof}
Using \eqref{eq:mle} and the equality $T^{1-H}=\lambda _{H} w^{H}_{T} T
^{H-1}$, we can write
%
\begin{align}
T^{1-H} ( \widehat{\alpha }_{T} - \alpha )& = \lambda
_{H} w^{H}_{T} T^{H-1} \biggl(
\frac{S_{T} K_{T} - I_{T} J_{T}}{w_{T}
^{H} K_{T} - J_{T}^{2}}  \gamma - \alpha \biggr)
\nonumber
\\
&= \lambda _{H} w^{H}_{T}
T^{H-1} \frac{\gamma S_{T} K_{T} - \gamma I
_{T} J_{T} - \alpha w_{T}^{H} K_{T} + \alpha J_{T}^{2}}{w_{T}^{H} K
_{T} - J_{T}^{2}}
\nonumber
\\
&= \frac{e^{2 \beta T} K_{T} \gamma \lambda _{H} T^{H-1}(S_{T} +
\beta J_{T} - \frac{\alpha }{\gamma } w_{T}^{H})}{e^{2 \beta T} K_{T}
- \frac{1}{w_{T}^{H}} e^{2 \beta T} J_{T}^{2}}
\nonumber
\\
&\quad +\frac{- \gamma \lambda _{H} T^{H-1} e^{2\beta T} J_{T}(I_{T} +
\beta K_{T}) + \alpha \lambda _{H} T^{H-1} e^{2 \beta T} J_{T}^{2}}{e
^{2 \beta T} K_{T} - \frac{1}{w_{T}^{H}} e^{2 \beta T} J_{T}^{2}}. \label{eq:alpha-asymptotic}
\end{align}
Note that by Corollary~\ref{l:J_asymptotic} and
Lemma~\ref{l:I+bK+K_mgf}, we see that $J_{T} = O_{\mathsf{P}}(T^{
\frac{1}{2}-H} e^{-\beta T})$,
$I_{T} + \beta K_{T} = O_{\mathsf{P}}(e^{-\beta T})$, and $e^{2
\beta T} K_{T} - \frac{1}{w_{T}^{H}} e^{2 \beta T} J_{T}^{2}
\xrightarrow{d}\zeta ^{2}$, as $T\to \infty $. Consequently, the second
term in the right-hand side of \eqref{eq:alpha-asymptotic} converges to
zero in probability.

Further, by \eqref{eq:mle},
%
\begin{align}
e^{-\beta T} ( \widehat{\beta }_{T} - \beta ) &=
e^{-
\beta T} \biggl( \frac{S_{T} J_{T} - w_{T}^{H} I_{T}}{w_{T}^{H} K_{T}
- J_{T}^{2}} - \beta \biggr)
\nonumber
\\
&= e^{-\beta T} \frac{S_{T} J_{T} - w_{T}^{H} I_{T} - \beta w_{T}
^{H} K_{T} + \beta J_{T}^{2}}{w_{T}^{H} K_{T} - J_{T}^{2}}
\nonumber
\\
&= \frac{- e^{\beta T}(I_{T} + \beta K_{T}) + e^{\beta T} J_{T} \frac{1}{w
_{T}^{H}}(S_{T} + \beta J_{T})}{e^{2 \beta T} K_{T} - \frac{1}{w_{T}
^{H}} e^{2 \beta T} J_{T}^{2}}. \label{eq:beta-asymptotic}
\end{align}
Corollaries~\ref{l:S+bJ_asymptotic} and \ref{l:J_asymptotic} imply that
$e^{\beta T} J_{T} \frac{1}{w_{T}^{H}}(S_{T} + \beta J_{T})$ converges
to zero in probability. Then applying Lemma~\ref{l:I+bK+K_mgf} and
Slutsky's theorem, from \eqref{eq:alpha-asymptotic}, \eqref{eq:beta-asymptotic} we get the convergence \eqref{eq:alpha_asymptotic}.
\end{proof}

\begin{remark}
Unlike the ergodic case (studied in
\cite{Lohvinenko-Ralchenko-2018}), in the non-ergodic case the initial
value $x_{0}$ affects the asymptotic bias of $\widehat{\beta }_{T}$. If
$\beta <0$, then the deterministic term $ (x_{0} -\frac{\alpha }{
\beta }) e^{-\beta t}$\index{deterministic term} in \eqref{eq:solution} does not converge to zero
and, moreover, has the same asymptotic order $O(e^{-\beta T})$ as the
stochastic term $\gamma \int_{0}^{t} e^{-\beta (t-s)} dB_{s}^{H}$. This
implies that the asymptotic behavior of the statistics $S_{T}$,
$I_{T}$, $J_{T}$, and $K_{T}$ depends on $x_{0}$. A similar dependence on
initial condition holds for the non-ergodic Ornstein--Uhlenbeck model
driven by the Brownian motion (see \cite{Feigin76} and
\cite[Prop.~3.46]{Kutoyants04}) and some explosive autoregressive models\index{explosive autoregressive models}
\cite{Anderson59,WangYu15,White58}.
\end{remark}

\begin{remark}
\label{rem:part-case}
The model \eqref{eq:fr-vas} with $x_{0}=\frac{\alpha }{\beta }$ was
considered in \cite[Th.~5.2]{Tanaka-Xiao-Yu}. In this particular
case we have $\zeta \stackrel{d}{=}\mathcal{N}  ( 0, \frac{1}{4
\beta ^{2} \sin \pi H}   )$. Consequently,\vadjust{\goodbreak}
\begin{equation*}
\frac{e^{-\beta T}}{2\beta } ( \widehat{\beta }_{T} - \beta )
\xrightarrow{d}\frac{X\sqrt{\sin \pi H}}{Y}, \quad \text{as }T\to \infty ,
\end{equation*}
where $X$ and $Y$ are two independent $\mathcal{N}(0,1)$ random
variables.\index{random variables} This completely agrees with
\cite[Th.~5.2]{Tanaka-Xiao-Yu}.
\end{remark}

\begin{remark}[Alternative parameterization]
An alternative specification of the fractional Vasicek model\index{fractional Vasicek model} is
%
\begin{equation}
\label{eq:alt_params_process_def} dX_{t} = \kappa ( \mu - X_{t} ) dt +
\gamma dB_{t}^{H}, \quad t\in [0,T], \quad
X_{0}=x_{0}.
\end{equation}
For the model \eqref{eq:alt_params_process_def}, the MLEs\index{maximum likelihood estimator (MLE)} of the
parameters $\mu $ and $\kappa $ have the following form
\cite{Lohvinenko-Ralchenko-2018}:
\begin{equation*}
\widehat{\mu }_{T} = \frac{ S_{T} K_{T} - I_{T} J_{T} }{ S_{T} J_{T} -
w_{T}^{H} I_{T}}  \gamma , \qquad
\widehat{\kappa }_{T} = \frac{S_{T} J_{T} - w_{T}^{H} I_{T}}{w_{T}
^{H} K_{T} - J_{T}^{2}}.
\end{equation*}
One can establish the following result: if $\kappa < 0$ and
$H \in (1/2,1)$, then
%
\begin{equation}
\label{eq:mu_asymptotic} %
\begin{pmatrix}
T^{1-H}   ( \widehat{\mu }_{T} - \mu   )
\\
e^{-\kappa T}   ( \widehat{\kappa }_{T} - \kappa   )
\end{pmatrix} %
\xrightarrow{d}
\begin{pmatrix}
\widetilde{\nu }
\\
\eta /\widetilde{\zeta }
\end{pmatrix} %
, \quad \text{as } T \to \infty ,
\end{equation}
where $\widetilde{\nu } \stackrel{d}{=}\mathcal{N}  (0, \frac{
\lambda _{H} \gamma ^{2}}{\kappa ^{2}}  )$, $\eta \stackrel{d}{=}
\mathcal{N}(0, 1)$, and
\begin{equation*}
\widetilde{\zeta } \stackrel{d} {=}\mathcal{N} \biggl( \frac{  ( x
_{0} - \mu   )\rho _{H}\sqrt{\lambda _{H}^{*} } (-\kappa )^{H - 1}}{\sqrt{2
\pi }},
\frac{1}{4 \kappa ^{2} \sin \pi H} \biggr),
\end{equation*}
are independent random variables.\index{random variables}

The proof of \eqref{eq:mu_asymptotic} is carried out by the delta-method.
By Taylor's theorem, for the function $g(x,y)=\frac{x}{y}$, we have as
$(x,y)\to (\alpha ,\beta )$
%
\begin{equation}
g(x,y)-g(\alpha ,\beta ) = \frac{1}{\beta }(x-\alpha ) -
\frac{\alpha
}{\beta ^{2}}(y-\beta ) + o \Bigl(\sqrt{(x-\alpha )^{2}+(y-
\beta )^{2}} \,\Bigr). \label{eq:taylor}
\end{equation}
Multiplying both sides of \eqref{eq:taylor} by $T^{1-H}$, and putting
$x=\widehat{\alpha }_{T}$, $y=\widehat{\beta }_{T}$, we get
%
\begin{equation}
\label{eq:taylor2} T^{1-H} \biggl(\frac{\widehat{\alpha }_{T}}{\widehat{\beta }_{T}} -
\frac{
\alpha }{\beta } \biggr) = \frac{1}{\beta }T^{1-H} (\widehat{
\alpha } _{T}-\alpha ) + R_{T},
\end{equation}
where
%
\begin{equation}
\label{eq:R->0} R_{T} = - \frac{\alpha }{\beta ^{2}}T^{1-H}
(\widehat{\beta }_{T}- \beta ) + o_{\mathsf{P}}
\Bigl(T^{1-H}\sqrt{ (\widehat{\alpha } _{T}-\alpha
)^{2}+ (\widehat{\beta }_{T}-\beta ) ^{2}}
\,\Bigr) \topr 0,
\end{equation}
as $T\to \infty $, since
$T^{1-H}  (\widehat{\beta }_{T}-\beta   )\topr 0$ and
$T^{1-H}  (\widehat{\alpha }_{T}-\alpha   )\xrightarrow{d}
\nu $ due to \eqref{eq:alpha_asymptotic}.

Finally, by Slutsky's theorem, from \eqref{eq:alpha_asymptotic},
\eqref{eq:taylor2} and \eqref{eq:R->0} we obtain the convergence
\begin{equation*}
\begin{pmatrix}
T^{1-H}  (\frac{\widehat{\alpha }_{T}}{\widehat{\beta }_{T}} - \frac{
\alpha }{\beta }  )
\\
e^{-\beta T}   (\widehat{\beta }_{T} - \beta   )
\end{pmatrix} %
= %
\begin{pmatrix}
\frac{1}{\beta }T^{1-H}   ( \widehat{\alpha }_{T} - \alpha   )
\\
e^{-\beta T}   ( \widehat{\beta }_{T} - \beta   )
\end{pmatrix} %
+ %
\begin{pmatrix}
R_{T}
\\
0
\end{pmatrix}
\xrightarrow{d} %
\begin{pmatrix}
\frac{\nu }{\beta }
\\
\frac{\eta }{\zeta }
\end{pmatrix} %
, \quad
\text{as } T\to \infty ,
\end{equation*}
which is equivalent to \eqref{eq:mu_asymptotic}, since $
\widehat{\mu }_{T} = \frac{\widehat{\alpha }_{T}}{\widehat{\beta }
_{T}}$, $\widehat{\kappa }_{T} = \widehat{\beta }_{T}$, $\mu = \frac{
\alpha }{\beta }$, $\kappa = \beta $.
\end{remark}

Now let us consider the situation when one of the parameters is known.
In this case we can obtain strong consistency of the corresponding MLEs
(instead of weak one) by applying the strong law of large numbers for
martingales,\index{martingales} see, e.g.,
\cite[Theorem~2.6.10]{liptser-shiryaev1}.

\begin{thm}
\label{th:MLE_alpha}
Let $\beta <0$ be known and $H \in (1/2,1)$. The MLE for $\alpha $ is
%
\begin{equation}
\label{eq:al-mle} \widetilde{\alpha }_{T} = \frac{\gamma }{ w_{T}^{H} }
(S_{T} + \beta J_{T} ).
\end{equation}
It is unbiased, strongly consistent and normal:
%
\begin{equation}
\label{eq:al-norm} T^{1 - H} ( \widetilde{\alpha }_{T} -
\alpha ) \stackrel{d} {=}\mathcal{N} \bigl(0, \lambda _{H} \gamma
^{2} \bigr).
\end{equation}
\end{thm}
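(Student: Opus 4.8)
The plan is to derive the estimator \eqref{eq:al-mle} by maximizing the likelihood ratio over $\alpha$ with $\beta$ held fixed, and then to reduce unbiasedness, exact normality and strong consistency to the corresponding properties of the Gaussian martingale $M^{H}$. For the derivation, note that in the exponent of \eqref{eq:likelihood_ratio}, using $J_{T}=\int_{0}^{T}P_{H}(t)\,dw_{t}^{H}$, the only terms depending on $\alpha$ are $\frac{\alpha}{\gamma}S_{T}-\frac{\alpha^{2}}{2\gamma^{2}}w_{T}^{H}+\frac{\alpha\beta}{\gamma}J_{T}$; differentiating in $\alpha$ and equating to zero gives $\frac{1}{\gamma}S_{T}-\frac{\alpha}{\gamma^{2}}w_{T}^{H}+\frac{\beta}{\gamma}J_{T}=0$, which is \eqref{eq:al-mle}, and the second derivative $-w_{T}^{H}/\gamma^{2}<0$ confirms this is the maximizer.

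The key step is an exact representation. Substituting \eqref{eq:Q_via_P}, namely $Q_{H}(s)=\frac{\alpha}{\gamma}-\beta P_{H}(s)$, into the semimartingale decomposition \eqref{eq:SviaM} yields $S_{T}=\frac{\alpha}{\gamma}w_{T}^{H}-\beta J_{T}+M_{T}^{H}$, hence $S_{T}+\beta J_{T}=\frac{\alpha}{\gamma}w_{T}^{H}+M_{T}^{H}$ and therefore $\widetilde\alpha_{T}-\alpha=\frac{\gamma}{w_{T}^{H}}M_{T}^{H}$. Unbiasedness is then immediate since $M^{H}$ is a martingale with $M_{0}^{H}=0$, so $\mathbb{E}M_{T}^{H}=0$. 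For normality, $M^{H}$ is a Gaussian martingale with $\langle M^{H}\rangle_{T}=w_{T}^{H}=\lambda_{H}^{-1}T^{2-2H}$, so $M_{T}^{H}\stackrel{d}{=}\mathcal{N}(0,w_{T}^{H})$; scaling by $\gamma/w_{T}^{H}$ and then by $T^{1-H}$ transforms the variance into $T^{2-2H}\gamma^{2}/w_{T}^{H}=\lambda_{H}\gamma^{2}$, which is exactly \eqref{eq:al-norm}. I would stress that, unlike the limiting statements earlier in the paper, this is an exact distributional identity valid for every $T>0$.

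For strong consistency, since $\langle M^{H}\rangle_{T}=\lambda_{H}^{-1}T^{2-2H}\to\infty$ as $T\to\infty$ (here the assumption $H<1$ is used), the strong law of large numbers for martingales \cite[Theorem~2.6.10]{liptser-shiryaev1} gives $M_{T}^{H}/\langle M^{H}\rangle_{T}\to 0$ a.s., whence $\widetilde\alpha_{T}-\alpha=\gamma M_{T}^{H}/w_{T}^{H}\to 0$ a.s. There is no genuinely difficult step: the whole argument is a clean reduction to well-known properties of $M^{H}$, and the only points requiring care are that $J_{T}$ enters the exponent of \eqref{eq:likelihood_ratio} correctly via $J_{T}=\int_{0}^{T}P_{H}(t)\,dw_{t}^{H}$ and that the variance arithmetic produces precisely $\lambda_{H}\gamma^{2}$.
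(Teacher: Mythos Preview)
Your proof is correct and follows essentially the same approach as the paper: both establish the identity $\widetilde{\alpha}_{T}-\alpha=\gamma M_{T}^{H}/w_{T}^{H}$ from \eqref{eq:SviaM} and \eqref{eq:Q_via_P}, deduce exact normality from the Gaussian martingale property of $M^{H}$, and obtain strong consistency from the strong law of large numbers for martingales. The only cosmetic differences are that you derive the form of the MLE directly by optimizing the likelihood (the paper cites \cite{Lohvinenko-Ralchenko-2017}) and that you read off normality straight from $M_{T}^{H}\stackrel{d}{=}\mathcal{N}(0,w_{T}^{H})$, whereas the paper routes this through Remark~\ref{rem:normality}; neither constitutes a genuinely different argument.
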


\begin{proof}
The form of the MLE \eqref{eq:al-mle} was established in
\cite[Eq.~(3.2)]{Lohvinenko-Ralchenko-2017}. The normality follows from
Remark~\ref{rem:normality}:
\begin{equation*}
T^{1-H} (\widetilde{\alpha }_{T} - \alpha ) = \lambda
_{H} \gamma T^{H-1} \biggl(S_{T}+\beta
J_{T}-\frac{\alpha }{\gamma }w^{H} _{T}
\biggr) \stackrel{d} {=}\mathcal{N} \bigl(0, \lambda _{H} \gamma
^{2} \bigr).
\end{equation*}
In order to obtain the strong consistency, we rewrite this equality
using the relation
%
\begin{equation}
\label{eq:M^H} S_{T}+\beta J_{T}-
\frac{\alpha }{\gamma }w^{H}_{T} = M^{H}_{T},
\end{equation}
which follows from \eqref{eq:SviaM} and \eqref{eq:Q_via_P}. Then
\begin{equation*}
\widetilde{\alpha }_{T} - \alpha = \gamma \frac{M^{H}_{T}}{w^{H}_{T}}.
\end{equation*}
Recall that $M^{H}_{T}$ is a martingale\index{martingales} and $\langle M^{H}\rangle _{T}
= w^{H}_{T}\to \infty $, as $T\to \infty $. Then $\frac{M^{H}_{T}}{w
^{H}_{T}} \to 0$ a.s., as $T\to \infty $, by the strong law of large
numbers for martingales\index{martingales}
\cite[Th.~2.6.10, Cor.~1]{liptser-shiryaev1}.
\end{proof}

\begin{remark}
Actually, the statement of Theorem~\ref{th:MLE_alpha} is true regardless
of the sign of $\beta $. For $\beta >0$, \eqref{eq:al-norm} was proved
in \cite[Th.~3.1]{Lohvinenko-Ralchenko-2017}.
\end{remark}

\begin{thm}
\label{th:MLE_beta}
Let $\alpha $ be known, $H \in (1/2,1)$ and $\beta <0$. The MLE for
$\beta $ is
%
\begin{equation}
\label{eq:be-mle} \widetilde{\beta }_{T} = \frac{ \frac{\alpha }{\gamma } J_{T} - I_{T}
}{ K_{T}}.
\end{equation}
It is strongly consistent and
%
\begin{equation}
\label{eq:beta-asymp} e^{-\beta T} ( \widetilde{\beta }_{T} -
\beta ) \xrightarrow{d}\frac{\eta }{\zeta }, \quad \text{as }T\to \infty ,
\end{equation}
where $\eta $ and $\zeta $ are the same as in
Theorem~\ref{th:alpha_asymptotic}.
\end{thm}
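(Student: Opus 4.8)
The plan is to mirror the structure of the proof of Theorem~\ref{th:MLE_alpha}, handling the two assertions—strong consistency and asymptotic distribution—separately. First I would recall that the form \eqref{eq:be-mle} of $\widetilde{\beta}_{T}$ was already derived in \cite{Lohvinenko-Ralchenko-2017} by maximizing \eqref{eq:likelihood_ratio} in $\beta$ with $\alpha$ held fixed, so no new computation is needed there; I would simply cite it.

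For the asymptotic distribution, the key algebraic step is to express $\widetilde{\beta}_{T}-\beta$ in terms of the normalized statistics whose joint limit is supplied by Lemma~\ref{l:I+bK+K_mgf}. Using \eqref{eq:be-mle} and the identity $\frac{\alpha}{\gamma}J_{T}-I_{T}-\beta K_{T} = \frac{\alpha}{\gamma}J_{T} - (I_{T}+\beta K_{T})$, I would write
\begin{equation*}
e^{-\beta T}(\widetilde{\beta}_{T}-\beta) = \frac{e^{-\beta T}\bigl(\frac{\alpha}{\gamma}J_{T} - (I_{T}+\beta K_{T})\bigr)}{K_{T}} = \frac{\frac{\alpha}{\gamma} e^{-\beta T} J_{T} - e^{\beta T}(I_{T}+\beta K_{T})}{e^{2\beta T}K_{T}}.
\end{equation*}
By Corollary~\ref{l:J_asymptotic}, $T^{H-1/2}e^{\beta T}J_{T}$ converges in distribution, hence $e^{-\beta T}J_{T} = O_{\mathsf{P}}(T^{1/2-H}e^{-2\beta T})$, so $\frac{\alpha}{\gamma}e^{-\beta T}J_{T}$ is of order $O_{\mathsf{P}}(T^{1/2-H}e^{-2\beta T})$, which is negligible once I realize that only after dividing by $e^{2\beta T}K_{T}$; more precisely, the numerator term $\frac{\alpha}{\gamma}e^{-\beta T}J_{T}$ divided by $e^{2\beta T}K_{T}$—wait, this requires care. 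Instead I would normalize directly: from Lemma~\ref{l:I+bK+K_mgf}, $(e^{\beta T}(I_{T}+\beta K_{T}), e^{2\beta T}K_{T}) \xrightarrow{d} (\eta\zeta,\zeta^{2})$, and $e^{\beta T}J_{T}=o_{\mathsf{P}}(1)$ would need to be checked; actually Corollary~\ref{l:J_asymptotic} gives $e^{\beta T}J_{T}=O_{\mathsf{P}}(T^{1/2-H})\to 0$ in probability since $H>1/2$. Then $e^{-\beta T}\frac{\alpha}{\gamma}J_{T}$ appearing over $e^{2\beta T}K_{T}$—rewriting, the whole expression equals $\frac{\frac{\alpha}{\gamma}e^{\beta T}J_{T}\cdot e^{-2\beta T} - e^{\beta T}(I_{T}+\beta K_{T})}{e^{2\beta T}K_{T}}$, and I must verify the first numerator term is $o_{\mathsf P}$ of the denominator; since $e^{\beta T}J_{T}\to 0$ and $e^{-2\beta T}$ blows up at the same rate as $e^{2\beta T}K_{T}^{-1}$ times... this is the delicate bookkeeping. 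The clean route: divide numerator and denominator by $e^{2\beta T}$ is already done; now $\frac{\alpha}{\gamma}e^{-\beta T}J_{T}$ is not obviously small, so instead I factor as $\frac{1}{e^{2\beta T}K_{T}}\bigl(\frac{\alpha}{\gamma}e^{-\beta T}J_{T}\bigr)$ and note $e^{-\beta T}J_{T}/e^{2\beta T}K_{T}$... Let me avoid this by just writing $e^{-\beta T}(\widetilde\beta_T-\beta)=\frac{-e^{\beta T}(I_T+\beta K_T)+\frac{\alpha}{\gamma}e^{\beta T}J_T}{e^{2\beta T}K_T}$ — multiplying top and bottom by $e^{2\beta T}$ from the displayed form gives numerator $-e^{\beta T}(I_T+\beta K_T)+\frac{\alpha}{\gamma}e^{\beta T}J_T$, and since $e^{\beta T}J_T\xrightarrow{\mathsf P}0$ while $e^{\beta T}(I_T+\beta K_T)\xrightarrow{d}\eta\zeta$ and $e^{2\beta T}K_T\xrightarrow{d}\zeta^2$, Slutsky's theorem yields $e^{-\beta T}(\widetilde\beta_T-\beta)\xrightarrow{d}\frac{-\eta\zeta}{\zeta^2}=\frac{-\eta}{\zeta}\stackrel{d}{=}\frac{\eta}{\zeta}$ by symmetry of $\eta\stackrel{d}{=}\mathcal N(0,1)$. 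That is the mechanism; the main obstacle is simply getting the normalization exponents right so that the $J_{T}$ contribution is genuinely negligible and the $(I_{T}+\beta K_{T}, K_{T})$ pair carries the limit.

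For strong consistency I would use the semimartingale decomposition. From \eqref{eq:be-mle}, \eqref{eq:Q_via_P}, \eqref{eq:SviaM} and the definitions of $I_{T},J_{T},K_{T}$ one has $\frac{\alpha}{\gamma}J_{T}-I_{T} = -\int_0^T P_H(t)\,dS_t + \frac{\alpha}{\gamma}\int_0^T P_H(t)\,dw_t^H$, and substituting $dS_t = Q_H(t)\,dw_t^H + dM_t^H = (\frac{\alpha}{\gamma}-\beta P_H(t))\,dw_t^H + dM_t^H$ gives $\frac{\alpha}{\gamma}J_{T}-I_{T} = \beta K_{T} - \int_0^T P_H(t)\,dM_t^H$, hence $\widetilde{\beta}_{T}-\beta = -\frac{1}{K_{T}}\int_0^T P_H(t)\,dM_t^H$. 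Now $N_T:=\int_0^T P_H(t)\,dM_t^H$ is a continuous martingale with quadratic variation $\langle N\rangle_T = \int_0^T P_H(t)^2\,d\langle M^H\rangle_t = \int_0^T P_H(t)^2\,dw_t^H = K_T$. Since $\beta<0$, one checks $K_T\to\infty$ a.s. as $T\to\infty$ (this follows e.g. from Lemma~\ref{l:I+bK+K_mgf}, which forces $e^{2\beta T}K_T$ to have a nondegenerate positive limit $\zeta^2>0$ a.s., so $K_T\sim\zeta^2 e^{-2\beta T}\to\infty$), and therefore the strong law of large numbers for martingales \cite[Th.~2.6.10]{liptser-shiryaev1} gives $N_T/\langle N\rangle_T = N_T/K_T\to 0$ a.s., i.e. $\widetilde{\beta}_{T}\to\beta$ a.s. The main obstacle here is rigorously establishing $K_T\to\infty$ almost surely—I would either argue it directly from the pathwise positivity and growth of $P_H$, or invoke the distributional statement of Lemma~\ref{l:I+bK+K_mgf} together with a monotonicity/Borel–Cantelli argument to upgrade to almost-sure divergence.

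Assembling these: first state the form of the MLE with a citation, then prove strong consistency via the martingale representation $\widetilde{\beta}_{T}-\beta = -N_T/K_T$ and the SLLN for martingales, and finally prove \eqref{eq:beta-asymp} via the Slutsky argument above using Lemma~\ref{l:I+bK+K_mgf} and Corollary~\ref{l:J_asymptotic}. The asymptotic independence of $\widetilde{\alpha}_{T}$ and $\widetilde{\beta}_{T}$, if desired, follows from the joint convergence in Lemma~\ref{l:I+bK+K_mgf} since $\widetilde{\alpha}_{T}-\alpha$ is driven by the first (independent) component $\xi$ while $\widetilde{\beta}_{T}-\beta$ is driven by $(\eta\zeta,\zeta^2)$.
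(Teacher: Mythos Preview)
Your proposal is correct and follows essentially the same route as the paper: cite \cite{Lohvinenko-Ralchenko-2017} for the form of $\widetilde{\beta}_{T}$, derive the martingale identity $\widetilde{\beta}_{T}-\beta=-\bigl(\int_{0}^{T}P_{H}\,dM^{H}\bigr)/K_{T}$ and apply the SLLN for martingales, then obtain \eqref{eq:beta-asymp} from the representation $e^{-\beta T}(\widetilde{\beta}_{T}-\beta)=\bigl(\tfrac{\alpha}{\gamma}e^{\beta T}J_{T}-e^{\beta T}(I_{T}+\beta K_{T})\bigr)/e^{2\beta T}K_{T}$ together with Lemma~\ref{l:I+bK+K_mgf}, Corollary~\ref{l:J_asymptotic}, and Slutsky. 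Two small remarks: your first justification of $K_{T}\to\infty$ a.s.\ via an ``a.s.\ limit $\zeta^{2}$'' is not valid since Lemma~\ref{l:I+bK+K_mgf} gives only distributional convergence, but the monotonicity fix you mention afterward is exactly what the paper uses ($K_{T}$ is nondecreasing in $T$, so convergence to $\infty$ in probability upgrades to a.s.); and the sign issue you flag, $-\eta/\zeta\stackrel{d}{=}\eta/\zeta$ by symmetry of $\eta$, is handled the same way (the paper leaves it implicit).
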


\begin{proof}
The form of the MLE \eqref{eq:be-mle} is found in
\cite[Eq.~(3.3)]{Lohvinenko-Ralchenko-2017}. The strong consistency is
established in the same way as in
\cite[Th.~3.2]{Lohvinenko-Ralchenko-2017}. It follows from
\eqref{eq:M^H} that\vadjust{\goodbreak}
\begin{align*}
\frac{\alpha }{\gamma } J_{T} - I_{T} - \beta
K_{T} &=\frac{\alpha }{
\gamma }\int_{0}^{T}
P_{H}(t)\,dw^{H}_{T}-\int
_{0}^{T} P_{H}(t)\,dS
_{t}-\beta \int_{0}^{T}
\bigl(P_{H}(t) \bigr)^{2}\,dw^{H}_{t}
\\
& = - \int_{0}^{T} P_{H}(t)
\,dM^{H}_{T}.
\end{align*}
Whence, \eqref{eq:be-mle} implies that
\begin{equation*}
\widetilde{\beta }_{T} - \beta = \frac{\frac{\alpha }{\gamma } J_{T} -
I_{T} - \beta K_{T}}{K_{T}} = -
\frac{\int_{0}^{T} P_{H}(t)\,dM^{H}
_{T}}{\int_{0}^{T}   ( P_{H}(t)   )^{2} \, d w_{t}^{H}}.
\end{equation*}
Since the process $M^{H}$ is a martingale\index{martingales} with the quadratic variation
$w^{H}$, the process\break $\int_{0}^{\cdot }P_{H}(t) \, d M_{t}^{H}$ is a
martingale\index{martingales} with the quadratic variation $\int_{0}^{\cdot }  ( P_{H}(t)
  )^{2} \, d w_{t}^{H}$. Note that $\int_{0}^{T}   ( P_{H}(t)
  )^{2} \, d w_{t}^{H} = K_{T} \to \infty $ in probablility,\index{probablility} by
Lemma \ref{l:I+bK+K_mgf}. This convergence holds almost surely, because
$\int_{0}^{T}   ( P_{H}(t)   )^{2} \, d w_{t}^{H}$ is
increasing in upper bound $T$. Therefore, by the strong law of large
numbers for martingales\index{martingales}~\cite[Th.~2.6.10,
Cor.~1]{liptser-shiryaev1}, we get that $\widetilde{\beta }_{T}
\to \beta $ a.s., as $T \to \infty $.

Finally, the convergence \eqref{eq:beta-asymp} follows from the
representation
\begin{equation*}
e^{-\beta T} ( \widetilde{\beta }_{T} - \beta ) =
\frac{\frac{
\alpha }{\gamma } e^{\beta T} J_{T} - e^{\beta T}(I_{T} + \beta K_{T})}{e
^{2\beta T} K_{T}},
\end{equation*}
Lemma \ref{l:I+bK+K_mgf}, Corollary \ref{l:J_asymptotic}, and Slutsky's
theorem.
\end{proof}

\begin{remark}
The particular case when the parameter $\alpha =0$ is known and
$x_{0}=0$ was studied in \cite{Tanaka2015}. Similarly to
Remark~\ref{rem:part-case}, we see that in this case the
convergence~\eqref{eq:beta-asymp} takes the form
\begin{equation*}
\frac{e^{-\beta T}}{2\beta } ( \widetilde{\beta }_{T} - \beta )
\xrightarrow{d}\frac{X\sqrt{\sin \pi H}}{Y}, \quad \text{as }T\to \infty ,
\end{equation*}
where $X$ and $Y$ are two independent $\mathcal{N}(0,1)$ random
variables.\index{random variables} This coincides with the result of
\cite[Th. 2]{Tanaka2015}.
\end{remark}

\begin{appendix}

\section{Modified Bessel function of the first kind}%
\label{sec:Bessel-function}
In this section we present some properties of the modified Bessel function
of the first kind $I_{\nu }(x)$, which are helpful for our proofs. For
more details on this topic we recommend the book~\cite{Watson}.
Let $\nu >-1$, $x\in \mathbb{R}$. Then the function $I_{\nu }(x)$ could
be defined by the following power series
\cite[Formula~50:6:1]{An-Atlas-of-Functions}:
\begin{equation*}
I_{\nu }(x) = \sum_{j=0}^{\infty }
\frac{(x/2)^{2j+\nu }}{j!\varGamma (j+1+
\nu )}.
\end{equation*}
Note, that if $x$ is negative and $\nu $ is non-integer, then the
function $I_{\nu }(x)$ is complex-valued. However, a function
$I_{\nu }(x)/x^{\nu }$ is always real-valued. This function equals
$2^{-\nu }/\varGamma (1+\nu )$ when $x=0$ and it is even, i.e.
%
\begin{equation}
\label{eq:even} \frac{I_{\nu }(-x)}{(-x)^{\nu }} = \frac{I_{\nu }(x)}{x^{\nu }},
\end{equation}
see~\cite[Formula~50:2:1]{An-Atlas-of-Functions}. For large values
of $x$ the function $I_{\nu }(x)$ has the following asymptotic behavior
\cite[Formula~9.7.1]{Abramovitz-Stegun}:
%
\begin{equation}
\label{eq:I_asymptotic} I_{\nu }(x) = \frac{e^{x}}{\sqrt{2 \pi x}} \biggl( 1 -
\frac{4 \nu
^{2} - 1}{8x} + O \bigl( x^{-2} \bigr) \biggr), \quad x \to
\infty .
\end{equation}

\section{MGFs related to the bivariate normal distribution}%
\label{sec:product_mgf_general}
MGF of the product of two normal random variables $X \stackrel{d}{=}
\mathcal{N}(m_{1}, \sigma _{1}^{2})$, $Y \stackrel{d}{=}\mathcal{N}(m
_{2},\allowbreak  \sigma _{2}^{2})$ with the correlation coefficient $r = \frac{\cov (X,
Y)}{\sigma _{1} \sigma _{2}}$ equals (see e.g.~\cite{Craig})
%
\begin{equation}
\label{eq:product_mgf_general} \mathbb{E} \bigl[ \exp \{ t X Y \} \bigr] = D^{-1/2}
\exp \biggl\{ \frac{  ( m_{1}^{2} \sigma _{2}^{2} + m_{2}^{2}
\sigma _{1}^{2} - 2 r m_{1} m_{2} \sigma _{1} \sigma _{2}   )t^{2} +
2 m_{1} m_{2} t}{2 D} \biggr\} ,
\end{equation}
where
\begin{equation*}
D = \bigl( 1 - (1 + r) \sigma _{1} \sigma _{2} t
\bigr) \bigl( 1 + (1 - r) \sigma _{1} \sigma _{2} t
\bigr).
\end{equation*}

\begin{lemma}
\label{l:product_mgf_for_I+bK}
For two independent normal random variables\index{random variables} $X \stackrel{d}{=}
\mathcal{N}  (m, \sigma ^{2}   )$ and $Y \stackrel{d}{=}
\mathcal{N}(0, 1)$,
%
\begin{align}
\label{eq:product_mgf_for_I+bK} \mathbb{E} \bigl[ \exp \bigl\{ \theta _{1} X Y +
\theta _{2} X^{2} \bigr\} \bigr]
= \bigl[1 - \sigma ^{2} \bigl(\theta _{1}^{2}
+ 2\theta _{2} \bigr) \bigr] ^{-\frac{1}{2}} \exp \biggl\{
\frac{m^{2}  (\theta _{1}^{2} + 2
\theta _{2}  )}{2   [1 - \sigma ^{2}   (\theta _{1}^{2} + 2
\theta _{2}  )  ] } \biggr\} .
\end{align}
\end{lemma}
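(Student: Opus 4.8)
The plan is to compute the joint moment generating function on the left-hand side directly, by conditioning on $Y$ and exploiting the independence of $X$ and $Y$. First I would write the expectation as an iterated expectation: $\mathbb{E}[\exp\{\theta_1 XY + \theta_2 X^2\}] = \mathbb{E}_Y\bigl[\mathbb{E}_X[\exp\{(\theta_1 Y)X + \theta_2 X^2\}]\bigr]$, treating $\theta_1 Y$ as a (random) linear coefficient. The inner expectation is the MGF-type integral of a Gaussian against $\exp\{aX + bX^2\}$ with $a = \theta_1 Y$, $b = \theta_2$; completing the square in the Gaussian density of $X \stackrel{d}{=} \mathcal{N}(m,\sigma^2)$ gives a closed form of the shape $C(b)\exp\{(\text{quadratic in }a,m)/(\text{linear in }b)\}$, valid provided $1 - 2\sigma^2 b > 0$.

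Next I would substitute $a = \theta_1 Y$ back in and observe that the resulting expression is itself $\exp\{\text{const} + (\text{linear in }Y) + (\text{quadratic in }Y)\}$ up to the deterministic prefactor $C(\theta_2)$. Since $Y \stackrel{d}{=} \mathcal{N}(0,1)$, the outer expectation is again an elementary Gaussian integral of the form $\mathbb{E}[\exp\{pY^2 + qY\}] = (1-2p)^{-1/2}\exp\{q^2/(2(1-2p))\}$, valid for $2p < 1$. Carrying out this second Gaussian integration and collecting the prefactors $C(\theta_2)(1-2p)^{-1/2}$ together, and likewise collecting the exponents, should yield exactly the right-hand side of \eqref{eq:product_mgf_for_I+bK}, with the combination $\theta_1^2 + 2\theta_2$ emerging naturally from the quadratic coefficient $p$ in $Y$.

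An alternative, shorter route is to invoke the general formula \eqref{eq:product_mgf_general} for $\mathbb{E}[\exp\{tXY\}]$ with $r=0$, but that formula does not contain the $\theta_2 X^2$ term, so one would still need the conditioning argument or a diagonalization of the quadratic form $\theta_1 XY + \theta_2 X^2$ in the two jointly Gaussian variables $(X,Y)$. The cleanest presentation is probably the two-step Gaussian integration above; the diagonalization approach (writing the exponent as a quadratic form and using the standard formula $\mathbb{E}[\exp\{\tfrac12 Z^\top A Z + b^\top Z\}]$ for a Gaussian vector $Z$) is equivalent and would also track the constraint $1 - \sigma^2(\theta_1^2 + 2\theta_2) > 0$ as the positivity of the relevant determinant.

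The main obstacle is purely bookkeeping: keeping the algebra of completing the square organized so that the two prefactors $C(\theta_2)$ and $(1-2p)^{-1/2}$ combine into the single factor $[1-\sigma^2(\theta_1^2+2\theta_2)]^{-1/2}$, and so that the cross terms in $m$ and $Y$ assemble correctly into $m^2(\theta_1^2+2\theta_2)/(2[1-\sigma^2(\theta_1^2+2\theta_2)])$. There is no conceptual difficulty and no analytic subtlety beyond the standard Gaussian-integral convergence conditions, which pin down the stated domain.
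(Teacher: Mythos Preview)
Your two-step conditioning argument is correct and will give the result with only routine algebra. However, the paper takes a different and slightly slicker route that you actually mentioned and then dismissed: it \emph{does} use formula \eqref{eq:product_mgf_general} directly. The trick you missed is to absorb the $\theta_2 X^2$ term into the product by writing
\[
\theta_1 XY + \theta_2 X^2 = X\,(\theta_1 Y + \theta_2 X),
\]
and then observing that $X$ and $\theta_1 Y + \theta_2 X$ are jointly Gaussian (with a nonzero correlation coming from the shared $X$). One then applies \eqref{eq:product_mgf_general} with $t=1$, $m_1=m$, $\sigma_1^2=\sigma^2$, $m_2=\theta_2 m$, $\sigma_2^2=\theta_1^2+\theta_2^2\sigma^2$, and $r=\theta_2\sigma/\sqrt{\theta_1^2+\theta_2^2\sigma^2}$; the denominator $D$ simplifies to $1-\sigma^2(\theta_1^2+2\theta_2)$ after expanding. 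So the formula you thought inapplicable is exactly what the paper uses.

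Your iterated-integral approach is arguably more elementary (it does not rely on Craig's formula) and makes the convergence condition $1-\sigma^2(\theta_1^2+2\theta_2)>0$ appear transparently as the product of two Gaussian-integral conditions. The paper's approach is shorter once \eqref{eq:product_mgf_general} is in hand, and it avoids repeating the completing-the-square computation that is already packaged in that formula.
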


\begin{proof}
Evidently,
\begin{equation*}
\theta _{1} X Y + \theta _{2} X^{2} = X (
\theta _{1} Y + \theta _{2} X ),
\end{equation*}
where $X \stackrel{d}{=}\mathcal{N}  (m, \sigma ^{2}   )$,
$\theta _{1} Y + \theta _{2} X \stackrel{d}{=}\mathcal{N}  (\theta
_{2} m, \theta _{1}^{2} + \theta _{2}^{2}\sigma ^{2}   )$, and
\begin{equation*}
\cov (X,\theta _{1} Y + \theta _{2} X ) =\theta
_{2}\cov (X, X ) = \theta _{2} \sigma ^{2}.
\end{equation*}
Applying \eqref{eq:product_mgf_general} with $t=1$, $m_{1}=m$,
$\sigma _{1}^{2}=\sigma ^{2}$, $m_{2}=\theta _{2} m$, $\sigma _{2}^{2}=
\theta _{1}^{2} + \theta _{2}^{2}\sigma ^{2}$, and $r=\frac{\theta _{2}
\sigma }{\sqrt{\theta _{1}^{2} + \theta _{2}^{2}\sigma ^{2}}}$, we get
\begin{align*}
& \mathbb{E} \bigl[ \exp \bigl\{ X (\theta _{1} Y + \theta
_{2} X ) \bigr\} \bigr]
\\*
&\quad = D^{-1/2} \exp \biggl\{ \frac{m^{2}  (\theta _{1}^{2} + \theta
_{2}^{2}\sigma ^{2}  ) + \theta _{2}^{2} m^{2} \sigma ^{2} - 2\theta
_{2}^{2} m^{2} \sigma ^{2} + 2 \theta _{2} m^{2}}{2 D} \biggr\}
\\
&\quad = D^{-1/2} \exp \biggl\{ \frac{m^{2}  (\theta _{1}^{2} + 2\theta
_{2}  )}{2 D} \biggr\} ,
\end{align*}
where
\begin{align*}
D &= \biggl(1- \biggl(1+\frac{\theta _{2} \sigma }{\sqrt{\theta _{1}
^{2} + \theta _{2}^{2}\sigma ^{2}}} \biggr)\sigma \sqrt{\theta
_{1} ^{2} + \theta _{2}^{2}
\sigma ^{2}} \biggr)
\\*
&\quad \times \biggl(1+ \biggl(1-\frac{\theta _{2} \sigma }{\sqrt{
\theta _{1}^{2} + \theta _{2}^{2}\sigma ^{2}}} \biggr)\sigma \sqrt{
\theta _{1}^{2} + \theta _{2}^{2}
\sigma ^{2}} \biggr)
\\
&= \bigl(1-\sigma \sqrt{\theta _{1}^{2} + \theta
_{2}^{2}\sigma ^{2}} -\theta _{2}
\sigma ^{2} \bigr) \bigl(1+\sigma \sqrt{\theta _{1}^{2}
+ \theta _{2}^{2} \sigma ^{2}} -\theta
_{2} \sigma ^{2} \bigr)
\\
&= \bigl(1-\theta _{2} \sigma ^{2} \bigr)^{2}
-\sigma ^{2} \bigl(\theta _{1}^{2} + \theta
_{2}^{2}\sigma ^{2} \bigr) = 1 - \sigma
^{2} \bigl(\theta _{1}^{2} + 2\theta
_{2} \bigr),
\end{align*}
whence \eqref{eq:product_mgf_for_I+bK} follows.
\end{proof}
\end{appendix}

\begin{acknowledgement}
The authors are grateful to the referees for their valuable comments and
suggestions.
\end{acknowledgement}

\begin{funding}
The second author acknowledges that the present research is carried
through within the frame and support of the ToppForsk project nr. \gnumber[refid=GS1]{274410}
of the \gsponsor[id=GS1,sponsor-id=501100005416]{Research Council of Norway} with title STORM: Stochastics for
Time-Space Risk Models.
\end{funding}



\begin{thebibliography}{99}

\bibitem{Abramovitz-Stegun}
\begin{bbook}
\beditor{\bsnm{Abramowitz}, \binits{M.}},
\beditor{\bsnm{Stegun}, \binits{I.A.}} (eds.):
\bbtitle{Handbook of Mathematical Functions with Formulas, Graphs, and
 Mathematical Tables}.
\bpublisher{Dover Publications, Inc., New York}
(\byear{1992}).
\MR{1225604}
\end{bbook}
%
\OrigBibText
\begin{bbook}
\beditor{\bsnm{Abramowitz}, \binits{M.}},
\beditor{\bsnm{Stegun}, \binits{I.A.}} (eds.):
\bbtitle{Handbook of Mathematical Functions with Formulas, Graphs, and
 Mathematical Tables}.
\bpublisher{Dover Publications, Inc., New York}
(\byear{1992}).
\MR{1225604}
\end{bbook}
\endOrigBibText
\bptok{structpyb}%
\endbibitem

\bibitem{Anderson59}
\begin{barticle}
\bauthor{\bsnm{Anderson}, \binits{T.W.}}:
\batitle{On asymptotic distributions of estimates of parameters of stochastic
 difference equations}.
\bjtitle{Ann. Math. Stat.}
\bvolume{30},
\bfpage{676}--\blpage{687}
(\byear{1959}).
\MR{107347}
\end{barticle}
%
\OrigBibText
\begin{barticle}
\bauthor{\bsnm{Anderson}, \binits{T.W.}}:
\batitle{On asymptotic distributions of estimates of parameters of stochastic
 difference equations}.
\bjtitle{Ann. Math. Statist.}
\bvolume{30},
\bfpage{676}--\blpage{687}
(\byear{1959}).
\MR{107347}
\end{barticle}
\endOrigBibText
\bptok{structpyb}%
\endbibitem

\bibitem{BESO}
\begin{barticle}
\bauthor{\bsnm{Belfadli}, \binits{R.}},
\bauthor{\bsnm{Es-Sebaiy}, \binits{K.}},
\bauthor{\bsnm{Ouknine}, \binits{Y.}}:
\batitle{{Parameter estimation for fractional Ornstein--Uhlenbeck processes:
 non-ergodic case}}.
\bjtitle{Front. Chem. Sci. Eng.}
\bvolume{1}(\bissue{1}),
\bfpage{1}--\blpage{16}
(\byear{2011})
\end{barticle}
%
\OrigBibText
\begin{barticle}
\bauthor{\bsnm{Belfadli}, \binits{R.}},
\bauthor{\bsnm{Es-Sebaiy}, \binits{K.}},
\bauthor{\bsnm{Ouknine}, \binits{Y.}}:
\batitle{{Parameter estimation for fractional Ornstein--Uhlenbeck processes:
 non-ergodic case}}.
\bjtitle{{Frontiers in Science and Engineering}}
\bvolume{1}(\bissue{1}),
\bfpage{1}--\blpage{16}
(\byear{2011})
\end{barticle}
\endOrigBibText
\bptok{structpyb}%
\endbibitem

\bibitem{Bercu2010}
\begin{barticle}
\bauthor{\bsnm{Bercu}, \binits{B.}},
\bauthor{\bsnm{Coutin}, \binits{L.}},
\bauthor{\bsnm{Savy}, \binits{N.}}:
\batitle{Sharp large deviations for the fractional {O}rnstein--{U}hlenbeck
 process}.
\bjtitle{Theory Probab. Appl.}
\bvolume{55}(\bissue{4}),
\bfpage{575}--\blpage{610}
(\byear{2011}).
\MR{2859161}
\end{barticle}
%
\OrigBibText
\begin{barticle}
\bauthor{\bsnm{Bercu}, \binits{B.}},
\bauthor{\bsnm{Coutin}, \binits{L.}},
\bauthor{\bsnm{Savy}, \binits{N.}}:
\batitle{Sharp large deviations for the fractional {O}rnstein--{U}hlenbeck
 process}.
\bjtitle{Teory Probab. Appl.}
\bvolume{55}(\bissue{4}),
\bfpage{575}--\blpage{610}
(\byear{2011}).
\MR{2859161}
\end{barticle}
\endOrigBibText
\bptok{structpyb}%
\endbibitem

\bibitem{BLL2014}
\begin{bbook}
\bauthor{\bsnm{Berzin}, \binits{C.}},
\bauthor{\bsnm{Latour}, \binits{A.}},
\bauthor{\bsnm{Le{\'o}n}, \binits{J.R.}}:
\bbtitle{Inference on the {H}urst Parameter and the Variance of Diffusions
 Driven by Fractional {B}rownian Motion}.
\bsertitle{Lecture Notes in Statistics},
vol.~\bseriesno{216}.
\bpublisher{Springer},
\blocation{Cham}
(\byear{2014}).
\MR{3289986}
\end{bbook}
%
\OrigBibText
\begin{bbook}
\bauthor{\bsnm{Berzin}, \binits{C.}},
\bauthor{\bsnm{Latour}, \binits{A.}},
\bauthor{\bsnm{Le{\'o}n}, \binits{J.R.}}:
\bbtitle{Inference on the {H}urst Parameter and the Variance of Diffusions
 Driven by Fractional {B}rownian Motion}.
\bsertitle{Lecture Notes in Statistics},
vol.~\bseriesno{216}.
\bpublisher{Springer},
\blocation{Cham}
(\byear{2014}).
\MR{3289986}
\end{bbook}
\endOrigBibText
\bptok{structpyb}%
\endbibitem

\bibitem{Bishwal11}
\begin{barticle}
\bauthor{\bsnm{Bishwal}, \binits{J.P.N.}}:
\batitle{Minimum contrast estimation in fractional {O}rnstein-{U}hlenbeck
 process: Continuous and discrete sampling}.
\bjtitle{Fract. Calc. Appl. Anal.}
\bvolume{14},
\bfpage{375}--\blpage{410}
(\byear{2011}).
\MR{2837637}
\end{barticle}
%
\OrigBibText
\begin{barticle}
\bauthor{\bsnm{Bishwal}, \binits{J.P.N.}}:
\batitle{Minimum contrast estimation in fractional {O}rnstein-{U}hlenbeck
 process: Continuous and discrete sampling}.
\bjtitle{Fract. Calc. Appl. Anal.}
\bvolume{14},
\bfpage{375}--\blpage{410}
(\byear{2011}).
\MR{2837637}
\end{barticle}
\endOrigBibText
\bptok{structpyb}%
\endbibitem

\bibitem{CKM}
\begin{barticle}
\bauthor{\bsnm{Cheridito}, \binits{P.}},
\bauthor{\bsnm{Kawaguchi}, \binits{H.}},
\bauthor{\bsnm{Makoto}, \binits{M.}}:
\batitle{{Fractional {O}rnstein--{U}hlenbeck processes.}}
\bjtitle{{Electron. J. Probab.}}
\bvolume{8},
\bfpage{1}--\blpage{14}
(\byear{2003}).
\MR{1961165}
\end{barticle}
%
\OrigBibText
\begin{barticle}
\bauthor{\bsnm{Cheridito}, \binits{P.}},
\bauthor{\bsnm{Kawaguchi}, \binits{H.}},
\bauthor{\bsnm{Makoto}, \binits{M.}}:
\batitle{{Fractional {O}rnstein--{U}hlenbeck processes.}}
\bjtitle{{Electron. J. Probab.}}
\bvolume{8},
\bfpage{1}--\blpage{14}
(\byear{2003}).
\MR{1961165}
\end{barticle}
\endOrigBibText
\bptok{structpyb}%
\endbibitem

\bibitem{Chronopoulou-Viens12a}
\begin{barticle}
\bauthor{\bsnm{Chronopoulou}, \binits{A.}},
\bauthor{\bsnm{Viens}, \binits{F.G.}}:
\batitle{Estimation and pricing under long-memory stochastic volatility}.
\bjtitle{Ann. Finance}
\bvolume{8}(\bissue{2--3}),
\bfpage{379}--\blpage{403}
(\byear{2012}).
\MR{2922802}
\end{barticle}
%
\OrigBibText
\begin{barticle}
\bauthor{\bsnm{Chronopoulou}, \binits{A.}},
\bauthor{\bsnm{Viens}, \binits{F.G.}}:
\batitle{Estimation and pricing under long-memory stochastic volatility}.
\bjtitle{Ann. Finance}
\bvolume{8}(\bissue{2-3}),
\bfpage{379}--\blpage{403}
(\byear{2012}).
\MR{2922802}
\end{barticle}
\endOrigBibText
\bptok{structpyb}%
\endbibitem

\bibitem{Corlay}
\begin{barticle}
\bauthor{\bsnm{Corlay}, \binits{S.}},
\bauthor{\bsnm{Lebovits}, \binits{J.}},
\bauthor{\bsnm{V\'{e}hel}, \binits{J.L.}}:
\batitle{Multifractional stochastic volatility models}.
\bjtitle{Math. Finance}
\bvolume{24}(\bissue{2}),
\bfpage{364}--\blpage{402}
(\byear{2014}).
\MR{3274947}
\end{barticle}
%
\OrigBibText
\begin{barticle}
\bauthor{\bsnm{Corlay}, \binits{S.}},
\bauthor{\bsnm{Lebovits}, \binits{J.}},
\bauthor{\bsnm{V\'{e}hel}, \binits{J.L.}}:
\batitle{Multifractional stochastic volatility models}.
\bjtitle{Math. Finance}
\bvolume{24}(\bissue{2}),
\bfpage{364}--\blpage{402}
(\byear{2014}).
\MR{3274947}
\end{barticle}
\endOrigBibText
\bptok{structpyb}%
\endbibitem

\bibitem{Craig}
\begin{barticle}
\bauthor{\bsnm{Craig}, \binits{C.C.}}:
\batitle{The frequency of function of $y/x$}.
\bjtitle{Ann. of Math. (2)}
\bvolume{30}(\bissue{1--4}),
\bfpage{471}--\blpage{486}
(\byear{1928/29}).
\MR{1502897}
\end{barticle}
%
\OrigBibText
\begin{barticle}
\bauthor{\bsnm{Craig}, \binits{C.C.}}:
\batitle{The frequency of function of $y/x$}.
\bjtitle{Ann. of Math. (2)}
\bvolume{30}(\bissue{1-4}),
\bfpage{471}--\blpage{486}
(\byear{1928/29}).
\MR{1502897}
\end{barticle}
\endOrigBibText
\bptok{structpyb}%
\endbibitem

\bibitem{Feigin76}
\begin{barticle}
\bauthor{\bsnm{Feigin}, \binits{P.D.}}:
\batitle{Maximum likelihood estimation for continuous-time stochastic
 processes}.
\bjtitle{Adv. Appl. Probab.}
\bvolume{8}(\bissue{4}),
\bfpage{712}--\blpage{736}
(\byear{1976}).
\MR{0426342}
\end{barticle}
%
\OrigBibText
\begin{barticle}
\bauthor{\bsnm{Feigin}, \binits{P.D.}}:
\batitle{Maximum likelihood estimation for continuous-time stochastic
 processes}.
\bjtitle{Advances in Appl. Probability}
\bvolume{8}(\bissue{4}),
\bfpage{712}--\blpage{736}
(\byear{1976}).
\MR{0426342}
\end{barticle}
\endOrigBibText
\bptok{structpyb}%
\endbibitem

\bibitem{Fink13}
\begin{barticle}
\bauthor{\bsnm{Fink}, \binits{H.}},
\bauthor{\bsnm{Kl\"{u}ppelberg}, \binits{C.}},
\bauthor{\bsnm{Z\"{a}hle}, \binits{M.}}:
\batitle{Conditional distributions of processes related to fractional
 {B}rownian motion}.
\bjtitle{J. Appl. Probab.}
\bvolume{50}(\bissue{1}),
\bfpage{166}--\blpage{183}
(\byear{2013}).
\MR{3076779}
\end{barticle}
%
\OrigBibText
\begin{barticle}
\bauthor{\bsnm{Fink}, \binits{H.}},
\bauthor{\bsnm{Kl\"{u}ppelberg}, \binits{C.}},
\bauthor{\bsnm{Z\"{a}hle}, \binits{M.}}:
\batitle{Conditional distributions of processes related to fractional
 {B}rownian motion}.
\bjtitle{J. Appl. Probab.}
\bvolume{50}(\bissue{1}),
\bfpage{166}--\blpage{183}
(\byear{2013}).
\MR{3076779}
\end{barticle}
\endOrigBibText
\bptok{structpyb}%
\endbibitem

\bibitem{HLW14}
\begin{barticle}
\bauthor{\bsnm{Hao}, \binits{R.}},
\bauthor{\bsnm{Liu}, \binits{Y.}},
\bauthor{\bsnm{Wang}, \binits{S.}}:
\batitle{Pricing credit default swap under fractional {V}asicek interest rate
 model}.
\bjtitle{J. Math. Finance}
\bvolume{4}(\bissue{1}),
\bfpage{10}--\blpage{20}
(\byear{2014})
\end{barticle}
%
\OrigBibText
\begin{barticle}
\bauthor{\bsnm{Hao}, \binits{R.}},
\bauthor{\bsnm{Liu}, \binits{Y.}},
\bauthor{\bsnm{Wang}, \binits{S.}}:
\batitle{Pricing credit default swap under fractional {V}asicek interest rate
 model}.
\bjtitle{Journal of Mathematical Finance}
\bvolume{4}(\bissue{1}),
\bfpage{10}--\blpage{20}
(\byear{2014})
\end{barticle}
\endOrigBibText
\bptok{structpyb}%
\endbibitem

\bibitem{HuNu}
\begin{barticle}
\bauthor{\bsnm{Hu}, \binits{Y.}},
\bauthor{\bsnm{Nualart}, \binits{D.}}:
\batitle{Parameter estimation for fractional {O}rnstein--{U}hlenbeck
 processes}.
\bjtitle{Stat. Probab. Lett.}
\bvolume{80}(\bissue{11--12}),
\bfpage{1030}--\blpage{1038}
(\byear{2010}).
\MR{2638974}
\end{barticle}
%
\OrigBibText
\begin{barticle}
\bauthor{\bsnm{Hu}, \binits{Y.}},
\bauthor{\bsnm{Nualart}, \binits{D.}}:
\batitle{Parameter estimation for fractional {O}rnstein--{U}hlenbeck
 processes}.
\bjtitle{Statist. Probab. Lett.}
\bvolume{80}(\bissue{11-12}),
\bfpage{1030}--\blpage{1038}
(\byear{2010}).
\MR{2638974}
\end{barticle}
\endOrigBibText
\bptok{structpyb}%
\endbibitem

\bibitem{HuSong13}
\begin{bchapter}
\bauthor{\bsnm{Hu}, \binits{Y.}},
\bauthor{\bsnm{Song}, \binits{J.}}:
\bctitle{Parameter estimation for fractional {O}rnstein--{U}hlenbeck processes
 with discrete observations}.
In: \bbtitle{Malliavin Calculus and Stochastic Analysis. A Festschrift in Honor
 of David Nualart}.
\bsertitle{Springer Proc. Math. Stat.},
vol.~\bseriesno{34},
pp.~\bfpage{427}--\blpage{442}.
\bpublisher{{Springer, New York}}
(\byear{2013}).
\MR{3070455}
\end{bchapter}
%
\OrigBibText
\begin{bchapter}
\bauthor{\bsnm{Hu}, \binits{Y.}},
\bauthor{\bsnm{Song}, \binits{J.}}:
\bctitle{Parameter estimation for fractional {O}rnstein--{U}hlenbeck processes
 with discrete observations}.
In: \bbtitle{Malliavin Calculus and Stochastic Analysis. A Festschrift in Honor
 of David Nualart}.
\bsertitle{Springer Proc. Math. Stat.},
vol.~\bseriesno{34},
pp.~\bfpage{427}--\blpage{442}.
\bpublisher{{Springer, New York}}
(\byear{2013}).
\MR{3070455}
\end{bchapter}
\endOrigBibText
\bptok{structpyb}%
\endbibitem

\bibitem{HuNuZhou}
\begin{barticle}
\bauthor{\bsnm{Hu}, \binits{Y.}},
\bauthor{\bsnm{Nualart}, \binits{D.}},
\bauthor{\bsnm{Zhou}, \binits{H.}}:
\batitle{Parameter estimation for fractional {O}rnstein--{U}hlenbeck processes
 of general {H}urst parameter}.
\bjtitle{Stat. Inference Stoch. Process.}
\bvolume{22}(\bissue{1}),
\bfpage{111}--\blpage{142}
(\byear{2019}).
\MR{3918739}
\end{barticle}
%
\OrigBibText
\begin{barticle}
\bauthor{\bsnm{Hu}, \binits{Y.}},
\bauthor{\bsnm{Nualart}, \binits{D.}},
\bauthor{\bsnm{Zhou}, \binits{H.}}:
\batitle{Parameter estimation for fractional {O}rnstein-{U}hlenbeck processes
 of general {H}urst parameter}.
\bjtitle{Stat. Inference Stoch. Process.}
\bvolume{22}(\bissue{1}),
\bfpage{111}--\blpage{142}
(\byear{2019}).
\MR{3918739}
\end{barticle}
\endOrigBibText
\bptok{structpyb}%
\endbibitem

\bibitem{KB}
\begin{barticle}
\bauthor{\bsnm{Kleptsyna}, \binits{M.L.}},
\bauthor{\bsnm{Le~Breton}, \binits{A.}}:
\batitle{Statistical analysis of the fractional {Ornstein--Uhlenbeck} type
 process}.
\bjtitle{Stat. Inference Stoch. Process.}
\bvolume{5}(\bissue{3}),
\bfpage{229}--\blpage{248}
(\byear{2002}).
\MR{1943832}
\end{barticle}
%
\OrigBibText
\begin{barticle}
\bauthor{\bsnm{Kleptsyna}, \binits{M.L.}},
\bauthor{\bsnm{Le~Breton}, \binits{A.}}:
\batitle{Statistical analysis of the fractional {Ornstein--Uhlenbeck} type
 process}.
\bjtitle{Stat. Inference Stoch. Process.}
\bvolume{5}(\bissue{3}),
\bfpage{229}--\blpage{248}
(\byear{2002}).
\MR{1943832}
\end{barticle}
\endOrigBibText
\bptok{structpyb}%
\endbibitem

\bibitem{KB2000}
\begin{barticle}
\bauthor{\bsnm{Kleptsyna}, \binits{M.L.}},
\bauthor{\bsnm{Le~Breton}, \binits{A.}},
\bauthor{\bsnm{Roubaud}, \binits{M.-C.}}:
\batitle{Parameter estimation and optimal filtering for fractional type
 stochastic systems}.
\bjtitle{Stat. Inference Stoch. Process.}
\bvolume{3}(\bissue{1--2}),
\bfpage{173}--\blpage{182}
(\byear{2000}).
\MR{1819294}
\end{barticle}
%
\OrigBibText
\begin{barticle}
\bauthor{\bsnm{Kleptsyna}, \binits{M.L.}},
\bauthor{\bsnm{Le~Breton}, \binits{A.}},
\bauthor{\bsnm{Roubaud}, \binits{M.-C.}}:
\batitle{Parameter estimation and optimal filtering for fractional type
 stochastic systems}.
\bjtitle{Stat. Inference Stoch. Process.}
\bvolume{3}(\bissue{1-2}),
\bfpage{173}--\blpage{182}
(\byear{2000}).
\MR{1819294}
\end{barticle}
\endOrigBibText
\bptok{structpyb}%
\endbibitem

\bibitem{KMM}
\begin{barticle}
\bauthor{\bsnm{Kozachenko}, \binits{Y.}},
\bauthor{\bsnm{Melnikov}, \binits{A.}},
\bauthor{\bsnm{Mishura}, \binits{Y.}}:
\batitle{On drift parameter estimation in models with fractional {B}rownian
 motion}.
\bjtitle{Statistics}
\bvolume{49}(\bissue{1}),
\bfpage{35}--\blpage{62}
(\byear{2015}).
\MR{3304366}
\end{barticle}
%
\OrigBibText
\begin{barticle}
\bauthor{\bsnm{Kozachenko}, \binits{Y.}},
\bauthor{\bsnm{Melnikov}, \binits{A.}},
\bauthor{\bsnm{Mishura}, \binits{Y.}}:
\batitle{On drift parameter estimation in models with fractional {B}rownian
 motion}.
\bjtitle{Statistics}
\bvolume{49}(\bissue{1}),
\bfpage{35}--\blpage{62}
(\byear{2015}).
\MR{3304366}
\end{barticle}
\endOrigBibText
\bptok{structpyb}%
\endbibitem

\bibitem{KMR2017}
\begin{bbook}
\bauthor{\bsnm{Kubilius}, \binits{K.}},
\bauthor{\bsnm{Mishura}, \binits{Y.}},
\bauthor{\bsnm{Ralchenko}, \binits{K.}}:
\bbtitle{Parameter Estimation in Fractional Diffusion Models}.
\bsertitle{Bocconi \& Springer Series},
vol.~\bseriesno{8}.
\bpublisher{Springer}
(\byear{2017}).
\MR{3304366}
\end{bbook}
%
\OrigBibText
\begin{bbook}
\bauthor{\bsnm{Kubilius}, \binits{K.}},
\bauthor{\bsnm{Mishura}, \binits{Y.}},
\bauthor{\bsnm{Ralchenko}, \binits{K.}}:
\bbtitle{Parameter Estimation in Fractional Diffusion Models}.
\bsertitle{Bocconi \& Springer Series},
vol.~\bseriesno{8}.
\bpublisher{Springer}
(\byear{2017}).
\MR{3304366}
\end{bbook}
\endOrigBibText
\bptok{structpyb}%
\endbibitem

\bibitem{kumirase}
\begin{barticle}
\bauthor{\bsnm{Kubilius}, \binits{K.}},
\bauthor{\bsnm{Mishura}, \binits{Y.}},
\bauthor{\bsnm{Ralchenko}, \binits{K.}},
\bauthor{\bsnm{Seleznjev}, \binits{O.}}:
\batitle{Consistency of the drift parameter estimator for the discretized
 fractional {O}rnstein--{U}hlenbeck process with {H}urst index
 {$H\in(0,\frac{1}{2})$}}.
\bjtitle{Electron. J. Stat.}
\bvolume{9}(\bissue{2}),
\bfpage{1799}--\blpage{1825}
(\byear{2015}).
\MR{3391120}
\end{barticle}
%
\OrigBibText
\begin{barticle}
\bauthor{\bsnm{Kubilius}, \binits{K.}},
\bauthor{\bsnm{Mishura}, \binits{Y.}},
\bauthor{\bsnm{Ralchenko}, \binits{K.}},
\bauthor{\bsnm{Seleznjev}, \binits{O.}}:
\batitle{Consistency of the drift parameter estimator for the discretized
 fractional {O}rnstein--{U}hlenbeck process with {H}urst index
 {$H\in(0,\frac{1}{2})$}}.
\bjtitle{Electron. J. Stat.}
\bvolume{9}(\bissue{2}),
\bfpage{1799}--\blpage{1825}
(\byear{2015}).
\MR{3391120}
\end{barticle}
\endOrigBibText
\bptok{structpyb}%
\endbibitem

\bibitem{Hypot}
\begin{barticle}
\bauthor{\bsnm{Kukush}, \binits{A.}},
\bauthor{\bsnm{Mishura}, \binits{Y.}},
\bauthor{\bsnm{Ralchenko}, \binits{K.}}:
\batitle{Hypothesis testing of the drift parameter sign for fractional
 {O}rnstein--{U}hlenbeck process}.
\bjtitle{Electron. J. Stat.}
\bvolume{11}(\bissue{1}),
\bfpage{385}--\blpage{400}
(\byear{2017}).
\MR{3608678}
\end{barticle}
%
\OrigBibText
\begin{barticle}
\bauthor{\bsnm{Kukush}, \binits{A.}},
\bauthor{\bsnm{Mishura}, \binits{Y.}},
\bauthor{\bsnm{Ralchenko}, \binits{K.}}:
\batitle{Hypothesis testing of the drift parameter sign for fractional
 {O}rnstein--{U}hlenbeck process}.
\bjtitle{Electron. J. Stat.}
\bvolume{11}(\bissue{1}),
\bfpage{385}--\blpage{400}
(\byear{2017}).
\MR{3608678}
\end{barticle}
\endOrigBibText
\bptok{structpyb}%
\endbibitem

\bibitem{Kutoyants04}
\begin{bbook}
\bauthor{\bsnm{Kutoyants}, \binits{Y.A.}}:
\bbtitle{Statistical Inference for Ergodic Diffusion Processes}.
\bsertitle{Springer Series in Statistics},
p.~\bfpage{481}.
\bpublisher{Springer}
(\byear{2004}).
\MR{2144185}
\end{bbook}
%
\OrigBibText
\begin{bbook}
\bauthor{\bsnm{Kutoyants}, \binits{Y.A.}}:
\bbtitle{Statistical Inference for Ergodic Diffusion Processes}.
\bsertitle{Springer Series in Statistics},
p.~\bfpage{481}.
\bpublisher{Springer}
(\byear{2004}).
\MR{2144185}
\end{bbook}
\endOrigBibText
\bptok{structpyb}%
\endbibitem

\bibitem{liptser-shiryaev1}
\begin{bbook}
\bauthor{\bsnm{Liptser}, \binits{R.S.}},
\bauthor{\bsnm{Shiryayev}, \binits{A.N.}}:
\bbtitle{Theory of Martingales}
vol.~\bseriesno{49}.
\bpublisher{Kluwer Academic Publishers Group},
\blocation{Dordrecht}
(\byear{1989}).
\MR{1022664}
\end{bbook}
%
\OrigBibText
\begin{bbook}
\bauthor{\bsnm{Liptser}, \binits{R.S.}},
\bauthor{\bsnm{Shiryayev}, \binits{A.N.}}:
\bbtitle{Theory of Martingales}
vol.~\bseriesno{49}.
\bpublisher{Kluwer Academic Publishers Group},
\blocation{Dordrecht}
(\byear{1989}).
\MR{1022664}
\end{bbook}
\endOrigBibText
\bptok{structpyb}%
\endbibitem

\bibitem{Lohvinenko-Ralchenko-2017}
\begin{barticle}
\bauthor{\bsnm{Lohvinenko}, \binits{S.}},
\bauthor{\bsnm{Ralchenko}, \binits{K.}}:
\batitle{Maximum likelihood estimation in the fractional {V}asicek model}.
\bjtitle{Lith. J. Stat.}
\bvolume{56}(\bissue{1}),
\bfpage{77}--\blpage{87}
(\byear{2017})
\end{barticle}
%
\OrigBibText
\begin{barticle}
\bauthor{\bsnm{Lohvinenko}, \binits{S.}},
\bauthor{\bsnm{Ralchenko}, \binits{K.}}:
\batitle{Maximum likelihood estimation in the fractional {V}asicek model}.
\bjtitle{Lithuanian Journal of Statistics}
\bvolume{56}(\bissue{1}),
\bfpage{77}--\blpage{87}
(\byear{2017})
\end{barticle}
\endOrigBibText
\bptok{structpyb}%
\endbibitem

\bibitem{Lohvinenko-Ralchenko-2018}
\begin{barticle}
\bauthor{\bsnm{Lohvinenko}, \binits{S.}},
\bauthor{\bsnm{Ralchenko}, \binits{K.}}:
\batitle{Asymptotic distribution of maximum likelihood estimator in fractional
 {V}asicek model}.
\bjtitle{Theory Probab. Math. Stat.}
\bvolume{99},
\bfpage{134}--\blpage{151}
(\byear{2018}).
\MR{3908663}
\end{barticle}
%
\OrigBibText
\begin{barticle}
\bauthor{\bsnm{Lohvinenko}, \binits{S.}},
\bauthor{\bsnm{Ralchenko}, \binits{K.}}:
\batitle{Asymptotic distribution of maximum likelihood estimator in fractional
 {V}asicek model}.
\bjtitle{Theory Probab. Math. Statist.}
\bvolume{99},
\bfpage{134}--\blpage{151}
(\byear{2018}).
\MR{3908663}
\end{barticle}
\endOrigBibText
\bptok{structpyb}%
\endbibitem

\bibitem{Lohvinenko-Ralchenko-Zhuchenko-2016}
\begin{barticle}
\bauthor{\bsnm{Lohvinenko}, \binits{S.}},
\bauthor{\bsnm{Ralchenko}, \binits{K.}},
\bauthor{\bsnm{Zhuchenko}, \binits{O.}}:
\batitle{Asymptotic properties of parameter estimators in fractional {V}asicek
 model}.
\bjtitle{Lith. J. Stat.}
\bvolume{55}(\bissue{1}),
\bfpage{102}--\blpage{111}
(\byear{2016})
\end{barticle}
%
\OrigBibText
\begin{barticle}
\bauthor{\bsnm{Lohvinenko}, \binits{S.}},
\bauthor{\bsnm{Ralchenko}, \binits{K.}},
\bauthor{\bsnm{Zhuchenko}, \binits{O.}}:
\batitle{Asymptotic properties of parameter estimators in fractional {V}asicek
 model}.
\bjtitle{Lithuanian Journal of Statistics}
\bvolume{55}(\bissue{1}),
\bfpage{102}--\blpage{111}
(\byear{2016})
\end{barticle}
\endOrigBibText
\bptok{structpyb}%
\endbibitem

\bibitem{MR-survey}
\begin{bchapter}
\bauthor{\bsnm{Mishura}, \binits{Y.}},
\bauthor{\bsnm{Ralchenko}, \binits{K.}}:
\bctitle{Drift parameter estimation in the models involving fractional
 {B}rownian motion}.
In: \bbtitle{Modern Problems of Stochastic Analysis and Statistics}.
\bsertitle{Springer Proc. Math. Stat.},
vol.~\bseriesno{208},
pp.~\bfpage{237}--\blpage{268}.
\bpublisher{{Springer, Cham}}
(\byear{2017}).
\MR{3747669}
\end{bchapter}
%
\OrigBibText
\begin{bchapter}
\bauthor{\bsnm{Mishura}, \binits{Y.}},
\bauthor{\bsnm{Ralchenko}, \binits{K.}}:
\bctitle{Drift parameter estimation in the models involving fractional
 {B}rownian motion}.
In: \bbtitle{Modern Problems of Stochastic Analysis and Statistics}.
\bsertitle{Springer Proc. Math. Stat.},
vol.~\bseriesno{208},
pp.~\bfpage{237}--\blpage{268}.
\bpublisher{{Springer, Cham}}
(\byear{2017}).
\MR{3747669}
\end{bchapter}
\endOrigBibText
\bptok{structpyb}%
\endbibitem

\bibitem{An-Atlas-of-Functions}
\begin{bbook}
\bauthor{\bsnm{Oldham}, \binits{K.}},
\bauthor{\bsnm{Myland}, \binits{J.}},
\bauthor{\bsnm{Spanier}, \binits{J.}}:
\bbtitle{An Atlas of Functions},
\bedition{2}nd edn.
\bpublisher{Springer}
(\byear{2009}).
\MR{2466333}
\end{bbook}
%
\OrigBibText
\begin{bbook}
\bauthor{\bsnm{Oldham}, \binits{K.}},
\bauthor{\bsnm{Myland}, \binits{J.}},
\bauthor{\bsnm{Spanier}, \binits{J.}}:
\bbtitle{An Atlas of Functions},
\bedition{2}nd edn.
\bpublisher{Springer}
(\byear{2009}).
\MR{2466333}
\end{bbook}
\endOrigBibText
\bptok{structpyb}%
\endbibitem

\bibitem{Song-Li-2018}
\begin{barticle}
\bauthor{\bsnm{Song}, \binits{L.}},
\bauthor{\bsnm{Li}, \binits{K.}}:
\batitle{Pricing option with stochastic interest rates and transaction costs in
 fractional {B}rownian markets}.
\bjtitle{Discrete Dyn. Nat. Soc.},
\bnumber{7056734-8}
(\byear{2018}).
\MR{3842704}
\end{barticle}
%
\OrigBibText
\begin{botherref}
\oauthor{\bsnm{Song}, \binits{L.}},
\oauthor{\bsnm{Li}, \binits{K.}}:
Pricing option with stochastic interest rates and transaction costs in
 fractional {B}rownian markets.
Discrete Dyn. Nat. Soc.,
7056734--8
(2018).
\MR{3842704}
\end{botherref}
\endOrigBibText
\bptok{structpyb}%
\endbibitem

\bibitem{Tanaka2013}
\begin{barticle}
\bauthor{\bsnm{Tanaka}, \binits{K.}}:
\batitle{Distributions of the maximum likelihood and minimum contrast
 estimators associated with the fractional {O}rnstein--{U}hlenbeck process}.
\bjtitle{Stat. Inference Stoch. Process.}
\bvolume{16}(\bissue{3}),
\bfpage{173}--\blpage{192}
(\byear{2013}).
\MR{3123562}
\end{barticle}
%
\OrigBibText
\begin{barticle}
\bauthor{\bsnm{Tanaka}, \binits{K.}}:
\batitle{Distributions of the maximum likelihood and minimum contrast
 estimators associated with the fractional {O}rnstein--{U}hlenbeck process}.
\bjtitle{Stat. Inference Stoch. Process}
\bvolume{16}(\bissue{3}),
\bfpage{173}--\blpage{192}
(\byear{2013}).
\MR{3123562}
\end{barticle}
\endOrigBibText
\bptok{structpyb}%
\endbibitem

\bibitem{Tanaka2015}
\begin{barticle}
\bauthor{\bsnm{Tanaka}, \binits{K.}}:
\batitle{Maximum likelihood estimation for the non-ergodic fractional
 {O}rnstein--{U}hlenbeck process}.
\bjtitle{Stat. Inference Stoch. Process.}
\bvolume{18}(\bissue{3}),
\bfpage{315}--\blpage{332}
(\byear{2015}).
\MR{3395610}
\end{barticle}
%
\OrigBibText
\begin{barticle}
\bauthor{\bsnm{Tanaka}, \binits{K.}}:
\batitle{Maximum likelihood estimation for the non-ergodic fractional
 {O}rnstein--{U}hlenbeck process}.
\bjtitle{Stat. Inference Stoch. Process}
\bvolume{18}(\bissue{3}),
\bfpage{315}--\blpage{332}
(\byear{2015}).
\MR{3395610}
\end{barticle}
\endOrigBibText
\bptok{structpyb}%
\endbibitem

\bibitem{Tanaka-Xiao-Yu}
\begin{botherref}
\oauthor{\bsnm{Tanaka}, \binits{K.}},
\oauthor{\bsnm{Xiao}, \binits{W.}},
\oauthor{\bsnm{Yu}, \binits{J.}}:
Maximum likelihood estimation for the fractional {V}asicek model.
Research Collection School Of Economics, Paper No. 08-2019,
1--31
(2019)
\end{botherref}
%
\OrigBibText
\begin{botherref}
\oauthor{\bsnm{Tanaka}, \binits{K.}},
\oauthor{\bsnm{Xiao}, \binits{W.}},
\oauthor{\bsnm{Yu}, \binits{J.}}:
Maximum likelihood estimation for the fractional {V}asicek model.
Research Collection School Of Economics, Paper No. 08-2019,
1--31
(2019)
\end{botherref}
\endOrigBibText
\bptok{structpyb}%
\endbibitem

\bibitem{vasicek}
\begin{barticle}
\bauthor{\bsnm{Vasicek}, \binits{O.}}:
\batitle{An equilibrium characterization of the term structure}.
\bjtitle{J. Financ. Econ.}
\bvolume{5}(\bissue{2}),
\bfpage{177}--\blpage{188}
(\byear{1977})
\end{barticle}
%
\OrigBibText
\begin{barticle}
\bauthor{\bsnm{Vasicek}, \binits{O.}}:
\batitle{An equilibrium characterization of the term structure}.
\bjtitle{Journal of Financial Economics}
\bvolume{5}(\bissue{2}),
\bfpage{177}--\blpage{188}
(\byear{1977})
\end{barticle}
\endOrigBibText
\bptok{structpyb}%
\endbibitem

\bibitem{WangYu15}
\begin{barticle}
\bauthor{\bsnm{Wang}, \binits{X.}},
\bauthor{\bsnm{Yu}, \binits{J.}}:
\batitle{Limit theory for an explosive autoregressive process}.
\bjtitle{Econ. Lett.}
\bvolume{126},
\bfpage{176}--\blpage{180}
(\byear{2015}).
\MR{3298417}
\end{barticle}
%
\OrigBibText
\begin{barticle}
\bauthor{\bsnm{Wang}, \binits{X.}},
\bauthor{\bsnm{Yu}, \binits{J.}}:
\batitle{Limit theory for an explosive autoregressive process}.
\bjtitle{Econom. Lett.}
\bvolume{126},
\bfpage{176}--\blpage{180}
(\byear{2015}).
\MR{3298417}
\end{barticle}
\endOrigBibText
\bptok{structpyb}%
\endbibitem

\bibitem{Watson}
\begin{bbook}
\bauthor{\bsnm{Watson}, \binits{G.N.}}:
\bbtitle{A Treatise on the Theory of {B}essel Functions}.
\bsertitle{Cambridge Mathematical Library}.
\bpublisher{Cambridge University Press},
\blocation{Cambridge}
(\byear{1995}).
\MR{1349110}
\end{bbook}
%
\OrigBibText
\begin{bbook}
\bauthor{\bsnm{Watson}, \binits{G.N.}}:
\bbtitle{A Treatise on the Theory of {B}essel Functions}.
\bsertitle{Cambridge Mathematical Library}.
\bpublisher{Cambridge University Press},
\blocation{Cambridge}
(\byear{1995}).
\MR{1349110}
\end{bbook}
\endOrigBibText
\bptok{structpyb}%
\endbibitem

\bibitem{White58}
\begin{barticle}
\bauthor{\bsnm{White}, \binits{J.S.}}:
\batitle{The limiting distribution of the serial correlation coefficient in the
 explosive case}.
\bjtitle{Ann. Math. Stat.}
\bvolume{29},
\bfpage{1188}--\blpage{1197}
(\byear{1958}).
\MR{100952}
\end{barticle}
%
\OrigBibText
\begin{barticle}
\bauthor{\bsnm{White}, \binits{J.S.}}:
\batitle{The limiting distribution of the serial correlation coefficient in the
 explosive case}.
\bjtitle{Ann. Math. Statist.}
\bvolume{29},
\bfpage{1188}--\blpage{1197}
(\byear{1958}).
\MR{100952}
\end{barticle}
\endOrigBibText
\bptok{structpyb}%
\endbibitem

\bibitem{Xiao-Yu-2017}
\begin{barticle}
\bauthor{\bsnm{Xiao}, \binits{W.}},
\bauthor{\bsnm{Yu}, \binits{J.}}:
\batitle{Asymptotic theory for estimating drift parameters in the fractional
 {V}asicek model}.
\bjtitle{Econom. Theory}
\bvolume{35}(\bissue{1}),
\bfpage{198}--\blpage{231}
(\byear{2019}).
\MR{3904176}
\end{barticle}
%
\OrigBibText
\begin{barticle}
\bauthor{\bsnm{Xiao}, \binits{W.}},
\bauthor{\bsnm{Yu}, \binits{J.}}:
\batitle{Asymptotic theory for estimating drift parameters in the fractional
 {V}asicek model}.
\bjtitle{Econometric Theory}
\bvolume{35}(\bissue{1}),
\bfpage{198}--\blpage{231}
(\byear{2019}).
\MR{3904176}
\end{barticle}
\endOrigBibText
\bptok{structpyb}%
\endbibitem

\bibitem{Xiao-Yu-2018}
\begin{barticle}
\bauthor{\bsnm{Xiao}, \binits{W.}},
\bauthor{\bsnm{Yu}, \binits{J.}}:
\batitle{Asymptotic theory for rough fractional {V}asicek models}.
\bjtitle{Econ. Lett.}
\bvolume{177},
\bfpage{26}--\blpage{29}
(\byear{2019}).
\MR{3904332}
\end{barticle}
%
\OrigBibText
\begin{barticle}
\bauthor{\bsnm{Xiao}, \binits{W.}},
\bauthor{\bsnm{Yu}, \binits{J.}}:
\batitle{Asymptotic theory for rough fractional {V}asicek models}.
\bjtitle{Econom. Lett.}
\bvolume{177},
\bfpage{26}--\blpage{29}
(\byear{2019}).
\MR{3904332}
\end{barticle}
\endOrigBibText
\bptok{structpyb}%
\endbibitem

\bibitem{XZZC14}
\begin{barticle}
\bauthor{\bsnm{Xiao}, \binits{W.}},
\bauthor{\bsnm{Zhang}, \binits{W.}},
\bauthor{\bsnm{Zhang}, \binits{X.}},
\bauthor{\bsnm{Chen}, \binits{X.}}:
\batitle{The valuation of equity warrants under the fractional {V}asicek
 process of the short-term interest rate}.
\bjtitle{Physica A}
\bvolume{394},
\bfpage{320}--\blpage{337}
(\byear{2014}).
\MR{3123524}
\end{barticle}
%
\OrigBibText
\begin{barticle}
\bauthor{\bsnm{Xiao}, \binits{W.}},
\bauthor{\bsnm{Zhang}, \binits{W.}},
\bauthor{\bsnm{Zhang}, \binits{X.}},
\bauthor{\bsnm{Chen}, \binits{X.}}:
\batitle{The valuation of equity warrants under the fractional {V}asicek
 process of the short-term interest rate}.
\bjtitle{Phys. A}
\bvolume{394},
\bfpage{320}--\blpage{337}
(\byear{2014}).
\MR{3123524}
\end{barticle}
\endOrigBibText
\bptok{structpyb}%
\endbibitem

\end{thebibliography}
\end{document}